\newtheorem{theorem}{Theorem}{}
\newtheorem{corollary}{Corollary}{}
{}
\newtheorem{remark}{Remark}{}
{}
\newtheorem{example}{Example}{}
\chardef\No=24
\begin{document}

\begin{center}
{\Large A new family of analytic functions defined by means of Rodrigues type formula}
\end{center}

\

{\centerline { Rabia Aktas, Abdullah Altin and Fatma Tasdelen}}

\

\centerline{\it Department of Mathematics, Ankara University, Ankara, Turkey}

\

{\centerline {E-mail addresses: raktas@science.ankara.edu.tr}}

\

{\centerline {altin@science.ankara.edu.tr }}

\

{\centerline {tasdelen@science.ankara.edu.tr }}

\

\begin{abstract} In this article, a class of analytic functions is investigated and their some
properties are established. Several recurrence relations and various classes
of bilinear and bilateral generating functions for these analytic functions
are also derived. Examples of some members belonging to this family of
analytic functions are given and differential equations satisfied by these
functions are also obtained.

\

{\it{Key words and Phrases}}: Rodrigues formula; Recurrence relation; Generating function; Bilateral and
Bilinear generating function; Differential equation; Hermite polynomial.

  2010 \textit{Mathematics Subject Classification}: Primary 33C45
\end{abstract}

\section{Introduction}

During the recent years, generalized and multivariable forms of the special
functions have important role in many branches of mathematics and mathematical
physics. Especially, special functions of mathematical physics and their
generalizations are often seen in physical problems.

For instance, Hermite polynomials described by Rodrigues formula below
\cite{R}
\begin{align}
H_{n}\, \left(  x\right)   &  =\left(  -1\right)  ^{n}e^{x^{2}}\frac{d^{n}%
}{dx^{n}}\left(  e^{-x^{2}}\right) \label{a1}\\
(n  &  =0,1,2,...)\nonumber
\end{align}
are generated by
\[
\sum \limits_{n=0}^{\infty}H_{n}\, \left(  x\right)  \frac{t^{n}}{n!}%
=\exp \left(  2xt-t^{2}\right)  .
\]
There are several applications of them in mathematics and physics. In
mathematics, they are met in probability, such as the Edgeworth series; in
combinatorics as an example of an Appell sequence and in the umbral calculus;
in physics, they arise in solution of the Schr\"{o}dinger equation for the
harmonic oscillator.

The most remarkable property of the Hermite polynomials is the fact that they
are orthogonal polynomials over the interval $\left(  -\infty,\infty \right)  $
with respect to the weight function $e^{-x^{2}}~$\cite{R}$.$ That is,%
\[%
%TCIMACRO{\dint \limits_{-\infty}^{\infty}}%
%BeginExpansion
{\displaystyle \int \limits_{-\infty}^{\infty}}
%EndExpansion
e^{-x^{2}}H_{n}\, \left(  x\right)  H_{m}\, \left(  x\right)  dx=0~~~,~~~n\neq
m,
\]
which is important for their use in quantum mechanics.

In \cite{AA1}, authors, being inspired by the Rodrigues formula for Hermite
polynomials, defined the family of polynomials $\phi_{k+n(m-1)}\, \left(
x\right)  $ via Rodrigues type formula%
\begin{align}
\phi_{k+n(m-1)}\, \left(  x\right)   &  =e^{\varphi_{m}\left(  x\right)
}\frac{d^{n}}{dx^{n}}\left(  \psi_{k}\left(  x\right)  e^{-\varphi_{m}\left(
x\right)  }\right) \label{aa2}\\
(n  &  =0,1,2,...)\nonumber
\end{align}
generated by%
\begin{equation}
\sum \limits_{n=0}^{\infty}\frac{\phi_{k+n(m-1)}\, \left(  x\right)  }{n!}%
t^{n}=\psi_{k}\left(  x+t\right)  e^{\varphi_{m}\left(  x\right)  -\varphi
_{m}\left(  x+t\right)  } \label{aa3}%
\end{equation}
where $\phi_{k+n(m-1)}\, \left(  x\right)  $ is a polynomial of degree
$k+n(m-1)$ since $\varphi_{m}\left(  x\right)  $ and $\psi_{k}\left(
x\right)  $ are polynomials of degree $m$ and $k$ , respectively.

Similar to $\left(  \ref{aa2}\right)  $, a class of polynomials with two
variables defined by Rodrigues type formula\ and their some properties were
investigated in \cite{AA2}. Moreover, a class of multivariable polynomials was
studied in \cite{GAA}.

Now, let's consider a family of analytic functions which is more general than
the polynomials $\left(  \ref{aa2}\right)  .$ Assume that $\varphi_{1}\left(
x\right)  ,~\varphi_{2}\left(  x\right)  $ and $\psi \left(  x\right)  $ are
analytic functions. Let a family of analytic functions be defined by Rodrigues
type formula
\begin{align}
\Theta_{n}\, \left(  x\right)   &  =\alpha^{\varphi_{1}\left(  x\right)
}\frac{d^{n}}{dx^{n}}\left(  \psi \left(  x\right)  \beta^{-\varphi_{2}\left(
x\right)  }\right) \label{aa4}\\
(n  &  =0,1,2,...)\nonumber
\end{align}
where $\alpha,\beta \in \mathbb{R}^{+}\backslash \left \{  1\right \}  $.

It is clear that the special case $\alpha=\beta=e,~\psi(x)=1,~\varphi
_{1}\left(  x\right)  =\varphi_{2}\left(  x\right)  =x^{2}$ gives the
polynomials $\Theta_{n}\, \left(  x\right)  =\left(  -1\right)  ^{n}H_{n}\,
\left(  x\right)  $ in terms of Hermite polynomials.

To give an another special case, we now recall that the 2-variable Hermite
Kamp\.{e} de Feriet polynomials (2VHKdFP) $H_{n}\, \left(  x,y\right)
~$\cite{AF} are defined by the generating function%
\[
\sum \limits_{n=0}^{\infty}H_{n}\, \left(  x,y\right)  \frac{t^{n}}{n!}%
=\exp \left(  xt+yt^{2}\right)  .
\]
In the special case $\alpha=\beta=e,~\psi(x)=1,~\varphi_{1}\left(  x\right)
=\varphi_{2}\left(  x\right)  =-x^{2},$ the functions given by $\left(
\ref{aa4}\right)  $ reduce to the polynomials $\Theta_{n}\, \left(  \frac
{x}{2}\right)  =H_{n}\, \left(  x,1\right)  .$

The main purpose of the present paper is to derive a generating function by
means of Cauchy's integral formula and to investigate some properties of these
functions. The set up of this paper is summarized as follows. In section 2, we
give a generating function satisfied by this family of analytic functions and
then, by using this generating function we obtain several recurrence
relations. In addition, we find differential equations satisfied by some
special families of analytic functions defined by $\left(  \ref{aa4}\right)
$, depending on choices of $\psi \left(  x\right)  $ and $\varphi_{2}\left(
x\right)  .$ Section 3 is devoted to prove a theorem to find various families
of bilateral and bilinear generating functions and then, to apply this theorem
to the special cases.

\section{A Generating Function and Recurrence Relations}

In literature, there are numerous investigations to obtain generating
functions and recurrence relations satisfied by special functions and
polynomials (see \cite{AA1,AA2,AAC,AE,AEO,ASA,EA, ES,GAA,C,KO,KYR,SOK}). It is
possible to derive a recurrence relation by using a generating function. In
this section, we first try to find a generating function for the family of
analytic functions $\Theta_{n}\, \left(  x\right)  .~$Afterwards, we give some
recurrence relations with the help of this generating function.

Now, we start with the following theorem.

\begin{theorem}
\label{theorem2.1}The family of analytic functions given by $\left(
\ref{aa4}\right)  $ has the following generating function
\begin{equation}
\sum \limits_{n=0}^{\infty}\frac{\Theta_{n}\, \left(  x\right)  }{n!}%
t^{n}=\alpha^{\varphi_{1}\left(  x\right)  }\psi \left(  x+t\right)
\beta^{-\varphi_{2}\left(  x+t\right)  } \label{aa*}%
\end{equation}
where $\varphi_{1}\left(  x\right)  ,$~$\varphi_{2}\left(  x\right)  $ and
$\psi \left(  x\right)  $ are analytic functions and $\alpha,\beta \in
\mathbb{R}^{+}\backslash \left \{  1\right \}  $.
\end{theorem}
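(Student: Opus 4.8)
The plan is to use Cauchy's integral formula for derivatives, exactly as the introduction promises. Write $\Theta_n(x)=\alpha^{\varphi_1(x)}\,g^{(n)}(x)$ where $g(x)=\psi(x)\beta^{-\varphi_2(x)}$ is analytic. Since $g$ is analytic in a neighborhood of $x$, Cauchy's formula gives
\[
g^{(n)}(x)=\frac{n!}{2\pi i}\oint_{C}\frac{g(z)}{(z-x)^{n+1}}\,dz,
\]
where $C$ is a small positively oriented circle around $x$. Therefore
\[
\frac{\Theta_n(x)}{n!}\,t^n=\frac{\alpha^{\varphi_1(x)}}{2\pi i}\oint_{C}\frac{g(z)\,t^n}{(z-x)^{n+1}}\,dz.
\]

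Next I would sum over $n$ and interchange summation and integration. For $t$ small enough that $|t|<|z-x|$ for all $z\in C$ (which can be arranged by first fixing $C$ of radius $r$ and then taking $|t|<r$), the geometric series $\sum_{n\ge 0} t^n/(z-x)^{n+1}=\frac{1}{(z-x)-t}$ converges uniformly on $C$, so the interchange is justified and
\[
\sum_{n=0}^{\infty}\frac{\Theta_n(x)}{n!}t^n=\frac{\alpha^{\varphi_1(x)}}{2\pi i}\oint_{C}\frac{g(z)}{(z-x)-t}\,dz=\frac{\alpha^{\varphi_1(x)}}{2\pi i}\oint_{C}\frac{g(z)}{z-(x+t)}\,dz.
\]
Now apply Cauchy's integral formula again: for $|t|<r$ the point $x+t$ lies inside $C$, so the last integral equals $g(x+t)$. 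This yields
\[
\sum_{n=0}^{\infty}\frac{\Theta_n(x)}{n!}t^n=\alpha^{\varphi_1(x)}g(x+t)=\alpha^{\varphi_1(x)}\psi(x+t)\beta^{-\varphi_2(x+t)},
\]
which is precisely \eqref{aa*}.

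An equally clean alternative, which avoids contour integrals, is to observe that the right-hand side expansion is just Taylor's theorem: $g(x+t)=\sum_{n\ge 0}\frac{g^{(n)}(x)}{n!}t^n$ for $t$ in the disc of convergence, and multiplying by the $t$-independent factor $\alpha^{\varphi_1(x)}$ gives the claim term by term after recognizing $\alpha^{\varphi_1(x)}g^{(n)}(x)=\Theta_n(x)$. I would likely present the Cauchy-integral version as the main argument since the paper explicitly advertises that method, and mention the Taylor viewpoint as a remark.

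The only real point requiring care — the "main obstacle," though it is mild — is the domain of validity: one must state that $\varphi_1,\varphi_2,\psi$ are analytic on a common domain containing the real point $x$, choose the contour radius $r$ small enough to stay inside that domain, and then restrict to $|t|<r$ so that both the geometric-series interchange and the second application of Cauchy's formula are legitimate. One should also note that $\alpha^{-\varphi_2(x)}:=\exp(-\varphi_2(x)\log\alpha)$ with $\alpha>0$ is genuinely analytic, so $g$ really is analytic and the formal manipulations are justified. With these remarks in place the proof is complete.
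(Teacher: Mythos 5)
Your proof is correct and follows essentially the same route as the paper: expand $\Theta_n$ via Cauchy's integral formula, interchange sum and integral under $|t|<|z-x|$, sum the geometric series, and apply Cauchy's formula again at the point $x+t$ inside $C$. Your added remarks on choosing the contour radius and the Taylor-series shortcut are fine refinements but do not change the argument.
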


\begin{proof}
By considering the Cauchy's integral formula%
\begin{equation}
\frac{d^{n}}{dx^{n}}\left(  \psi \left(  x\right)  \beta^{-\varphi_{2}\left(
x\right)  }\right)  =\frac{n!}{2\pi i}\oint \limits_{C}\frac{\psi \left(
z\right)  \beta^{-\varphi_{2}\left(  z\right)  }dz}{\left(  z-x\right)
^{n+1}} \label{aa1}%
\end{equation}
for a suitable contour $C$,
we can write
\begin{align*}
\sum \limits_{n=0}^{\infty}\frac{\Theta_{n}\, \left(  x\right)  }{n!}t^{n}  &
=\alpha^{\varphi_{1}\left(  x\right)  }\sum \limits_{n=0}^{\infty}\frac{t^{n}%
}{2\pi i}\oint \limits_{C}\frac{\psi \left(  z\right)  \beta^{-\varphi
_{2}\left(  z\right)  }dz}{\left(  z-x\right)  ^{n+1}}\\
&  =\frac{\alpha^{\varphi_{1}\left(  x\right)  }}{2\pi i}\oint \limits_{C}%
\frac{\psi \left(  z\right)  \beta^{-\varphi_{2}\left(  z\right)  }}{z-x}%
\sum \limits_{n=0}^{\infty}\left(  \frac{t}{z-x}\right)  ^{n}dz\\
&  =\frac{\alpha^{\varphi_{1}\left(  x\right)  }}{2\pi i}\oint \limits_{C}%
\frac{\psi \left(  z\right)  \beta^{-\varphi_{2}\left(  z\right)  }}{z-x}%
\dfrac{1}{1-\dfrac{t}{z-x}}dz\\
&  =\frac{\alpha^{\varphi_{1}\left(  x\right)  }}{2\pi i}\oint \limits_{C}%
\frac{\psi \left(  z\right)  \beta^{-\varphi_{2}\left(  z\right)  }}{z-\left(
x+t\right)  }dz
\end{align*}
for
\[
\left \vert \frac{t}{z-x}\right \vert <1.
\]
If we take into account Cauchy's integral formula $\left(  \ref{aa1}\right)  $
again, we conclude that%
\[
\sum \limits_{n=0}^{\infty}\frac{\Theta_{n}\, \left(  x\right)  }{n!}%
t^{n}=\alpha^{\varphi_{1}\left(  x\right)  }\psi \left(  x+t\right)
\beta^{-\varphi_{2}\left(  x+t\right)  }%
\]
where the point $x+t\, \ $should also be inside the contour $C$.
\end{proof}

Let's use the above theorem to obtain various recurrence relations for the
functions $\left(  \ref{aa4}\right)  $. For convenience, let the right side of
the generating function (\ref{aa*}) be denoted by%
\[
F\left(  x,t\right)  =\alpha^{\varphi_{1}\left(  x\right)  }\psi \left(
x+t\right)  \beta^{-\varphi_{2}\left(  x+t\right)  }.
\]

\begin{theorem}
\label{theorem2.2}For the family of analytic functions given by $\left(
\ref{aa4}\right)  ,$ we have
\begin{align}
&
%TCIMACRO{\dsum \limits_{p=0}^{n}}%
%BeginExpansion
{\displaystyle \sum \limits_{p=0}^{n}}
%EndExpansion
\left(
\begin{array}
[c]{c}%
n\\
p
\end{array}
\right)  \left \{  \Theta_{n-p+1}\, \left(  x\right)  \psi^{(p)}\left(
x\right)  -\Theta_{n-p}\, \left(  x\right)  \psi^{(p+1)}\left(  x\right)
\right \} \label{aa9}\\
&  =-\ln \beta~%
%TCIMACRO{\dsum \limits_{k=0}^{n}}%
%BeginExpansion
{\displaystyle \sum \limits_{k=0}^{n}}
%EndExpansion%
%TCIMACRO{\dsum \limits_{p=0}^{n-k}}%
%BeginExpansion
{\displaystyle \sum \limits_{p=0}^{n-k}}
%EndExpansion
\left(
\begin{array}
[c]{c}%
n\\
k
\end{array}
\right)  \left(
\begin{array}
[c]{c}%
n-k\\
p
\end{array}
\right)  \Theta_{n-p-k}\, \left(  x\right)  \psi^{(p)}\left(  x\right)
\varphi_{2}^{\left(  k+1\right)  }\left(  x\right) \nonumber
\end{align}
and%
\begin{align}
&
%TCIMACRO{\dsum \limits_{p=0}^{n}}%
%BeginExpansion
{\displaystyle \sum \limits_{p=0}^{n}}
%EndExpansion
\left(
\begin{array}
[c]{c}%
n\\
p
\end{array}
\right)  \left \{  \Theta_{n-p}^{\prime}\, \left(  x\right)  \psi^{(p)}\left(
x\right)  -\ln \alpha~\varphi_{1}^{\prime}\left(  x\right)  \Theta_{n-p}\,
\left(  x\right)  \psi^{(p)}\left(  x\right)  \right. \label{aa10}\\
&  \left.  -~\Theta_{n-p}\, \left(  x\right)  \psi^{(p+1)}\left(  x\right)
\right \} \nonumber \\
&  =-\ln \beta~%
%TCIMACRO{\dsum \limits_{k=0}^{n}}%
%BeginExpansion
{\displaystyle \sum \limits_{k=0}^{n}}
%EndExpansion%
%TCIMACRO{\dsum \limits_{p=0}^{n-k}}%
%BeginExpansion
{\displaystyle \sum \limits_{p=0}^{n-k}}
%EndExpansion
\left(
\begin{array}
[c]{c}%
n\\
k
\end{array}
\right)  \left(
\begin{array}
[c]{c}%
n-k\\
p
\end{array}
\right)  \Theta_{n-p-k}\, \left(  x\right)  \psi^{(p)}\left(  x\right)
\varphi_{2}^{\left(  k+1\right)  }\left(  x\right) \nonumber
\end{align}
for $n\geq0.$
\end{theorem}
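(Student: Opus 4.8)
The plan is to deduce both identities from the closed form of the generating function in Theorem~\ref{theorem2.1} by differentiating $F(x,t)=\alpha^{\varphi_{1}(x)}\psi(x+t)\beta^{-\varphi_{2}(x+t)}$ and comparing coefficients of $t^{n}/n!$. Throughout I would work in the region where $F$, $\psi$, $\varphi_{1}$ and $\varphi_{2}$ are analytic, so that every series below converges and may be differentiated and multiplied term by term. The key observation is that, besides $F(x,t)=\sum_{n\geq0}\Theta_{n}(x)\,t^{n}/n!$, one also has the Taylor expansions $\psi(x+t)=\sum_{n\geq0}\psi^{(n)}(x)\,t^{n}/n!$, $\psi^{\prime}(x+t)=\sum_{n\geq0}\psi^{(n+1)}(x)\,t^{n}/n!$ and $\varphi_{2}^{\prime}(x+t)=\sum_{n\geq0}\varphi_{2}^{(n+1)}(x)\,t^{n}/n!$, and that the Cauchy product of such series of exponential type reproduces exactly the binomial sums in the statement: the product of two of them yields a factor $\sum_{p}\binom{n}{p}$, and the product of three yields $\sum_{k}\sum_{p}\binom{n}{k}\binom{n-k}{p}$.

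For $(\ref{aa9})$, a direct computation from the closed form gives
\[
\frac{\partial F}{\partial t}=\alpha^{\varphi_{1}(x)}\beta^{-\varphi_{2}(x+t)}[\psi^{\prime}(x+t)-\ln\beta~\psi(x+t)\varphi_{2}^{\prime}(x+t)],
\]
so that, after multiplying by $\psi(x+t)$ and subtracting $F(x,t)\psi^{\prime}(x+t)$, the terms $\psi^{\prime}\psi$ cancel and one is left with the functional identity
\[
\frac{\partial F}{\partial t}~\psi(x+t)-F(x,t)\,\psi^{\prime}(x+t)=-\ln\beta~F(x,t)\,\psi(x+t)\,\varphi_{2}^{\prime}(x+t).
\]
Since $\partial F/\partial t=\sum_{n\geq0}\Theta_{n+1}(x)\,t^{n}/n!$, extracting the coefficient of $t^{n}/n!$ on the left (two double Cauchy products) yields $\sum_{p=0}^{n}\binom{n}{p}\{\Theta_{n-p+1}(x)\psi^{(p)}(x)-\Theta_{n-p}(x)\psi^{(p+1)}(x)\}$, while the right side, viewed as the triple Cauchy product of $F$, $\psi(x+t)$ and $\varphi_{2}^{\prime}(x+t)$, contributes $-\ln\beta\sum_{k=0}^{n}\sum_{p=0}^{n-k}\binom{n}{k}\binom{n-k}{p}\Theta_{n-p-k}(x)\psi^{(p)}(x)\varphi_{2}^{(k+1)}(x)$. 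Equating the two gives $(\ref{aa9})$.

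For $(\ref{aa10})$, I would instead differentiate $F$ in $x$; this introduces an extra term $\ln\alpha~\varphi_{1}^{\prime}(x)F$, that is $\partial F/\partial t=\partial F/\partial x-\ln\alpha~\varphi_{1}^{\prime}(x)F$, where now $\partial F/\partial x=\sum_{n\geq0}\Theta_{n}^{\prime}(x)\,t^{n}/n!$. Substituting this into the functional identity above and again reading off the coefficient of $t^{n}/n!$ --- noting that $\ln\alpha~\varphi_{1}^{\prime}(x)$ is constant in $t$, so it simply multiplies the Cauchy product of $F$ and $\psi(x+t)$ --- turns the left side into $\sum_{p=0}^{n}\binom{n}{p}\{\Theta_{n-p}^{\prime}(x)\psi^{(p)}(x)-\ln\alpha~\varphi_{1}^{\prime}(x)\Theta_{n-p}(x)\psi^{(p)}(x)-\Theta_{n-p}(x)\psi^{(p+1)}(x)\}$, with the same right side as before; this is $(\ref{aa10})$.

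The elementary $t$- and $x$-derivatives and the cancellations producing the two functional identities are routine. The step that needs care is the iterated Cauchy product on the right side: one must expand $F(x,t)\,\psi(x+t)\,\varphi_{2}^{\prime}(x+t)$ as a product of three series of exponential type, split off the $\varphi_{2}^{(k+1)}(x)$ factor with weight $\binom{n}{k}$, and recognize the remaining convolution of the $\Theta$'s and the $\psi^{(p)}$'s as the inner $\binom{n-k}{p}$ sum; getting the index ranges $0\leq k\leq n$, $0\leq p\leq n-k$ to line up with the statement is the only genuine subtlety.
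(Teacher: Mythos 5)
Your argument is correct and follows essentially the same route as the paper: both establish the functional identity $\psi(x+t)\,\partial_t F - \psi^{\prime}(x+t)\,F = -\ln\beta\,\varphi_{2}^{\prime}(x+t)\,\psi(x+t)\,F$ (and its $x$-derivative analogue, which you recover via $\partial_t F = \partial_x F - \ln\alpha\,\varphi_{1}^{\prime}(x)F$ while the paper writes it down directly), then expand $\psi$, $\psi^{\prime}$, $\varphi_{2}^{\prime}$ in Taylor series about $t=0$ and compare coefficients of $t^{n}/n!$ through the same single and double binomial convolutions. No gaps; your index bookkeeping for the triple Cauchy product matches the paper's shift $n\mapsto n-p$, $n\mapsto n-k$.
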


\begin{proof}
If we take the derivative of the function $F\left(  x,t\right)  =\alpha
^{\varphi_{1}\left(  x\right)  }\psi \left(  x+t\right)  \beta^{-\varphi
_{2}\left(  x+t\right)  }$ with respect to $t,\,$we arrive at%
\begin{equation}
\psi \left(  x+t\right)  \frac{\partial}{\partial t}F\left(  x,t\right)
=\left \{  -\ln \beta~\varphi_{2}^{^{\prime}}\left(  x+t\right)  \psi \left(
x+t\right)  +\psi^{\prime}\left(  x+t\right)  \right \}  F\left(  x,t\right)  .
\label{aa7}%
\end{equation}
Considering the left side of the generating function (\ref{aa*}) in $\left(
\ref{aa7}\right)  $ gives
\[
\psi \left(  x+t\right)  \sum \limits_{n=0}^{\infty}\Theta_{n+1}\, \left(
x\right)  \frac{t^{n}}{n!}=\left \{  -\ln \beta~\varphi_{2}^{^{\prime}}\left(
x+t\right)  \psi \left(  x+t\right)  +\psi^{\prime}\left(  x+t\right)
\right \}  \sum \limits_{n=0}^{\infty}\Theta_{n}\, \left(  x\right)  \frac
{t^{n}}{n!}.
\]
If we use taylor series of the analytic functions $\psi \left(  x+t\right)
,~\psi^{\prime}\left(  x+t\right)  $ and $\varphi_{2}^{^{\prime}}\left(
x+t\right)  $ at $t=0,$ respectively%
\begin{align*}
\psi \left(  x+t\right)   &  =\sum \limits_{p=0}^{\infty}\psi^{\left(  p\right)
}\left(  x\right)  \dfrac{t^{p}}{p!},\\
\psi^{\prime}\left(  x+t\right)   &  =\sum \limits_{p=0}^{\infty}\psi^{\left(
p+1\right)  }\left(  x\right)  \frac{t^{p}}{p!}%
\end{align*}
and%
\[
\varphi_{2}^{^{\prime}}\left(  x+t\right)  =\sum \limits_{k=0}^{\infty}%
\varphi_{2}^{\left(  k+1\right)  }\left(  x\right)  \frac{t^{k}}{k!},
\]
we have%
\begin{align*}
\sum \limits_{n,p=0}^{\infty}\Theta_{n+1}\, \left(  x\right)  \psi^{\left(
p\right)  }\left(  x\right)  \frac{t^{n+p}}{n!p!}  &  =-\ln \beta
~\sum \limits_{n,k,p=0}^{\infty}\Theta_{n}\, \left(  x\right)  \varphi
_{2}^{\left(  k+1\right)  }\left(  x\right)  \psi^{\left(  p\right)  }\left(
x\right)  \frac{t^{n+k+p}}{n!k!p!}\\
&  +\sum \limits_{n,p=0}^{\infty}\Theta_{n}\, \left(  x\right)  \psi^{\left(
p+1\right)  }\left(  x\right)  \frac{t^{n+p}}{n!p!}.
\end{align*}
Upon inverting the order of summation above, if we replace $n$ by $n-p,$ we
can write%
\begin{align*}
&  \sum \limits_{n=0}^{\infty}\sum \limits_{p=0}^{n}\Theta_{n-p+1}\, \left(
x\right)  \psi^{\left(  p\right)  }\left(  x\right)  \frac{t^{n}}{\left(
n-p\right)  !p!}\\
&  =-\ln \beta~\sum \limits_{n,k=0}^{\infty}\sum \limits_{p=0}^{n}\Theta_{n-p}\,
\left(  x\right)  \varphi_{2}^{\left(  k+1\right)  }\left(  x\right)
\psi^{\left(  p\right)  }\left(  x\right)  \frac{t^{n+k}}{\left(  n-p\right)
!k!p!}\\
&  +\sum \limits_{n=0}^{\infty}\sum \limits_{p=0}^{n}\Theta_{n-p}\, \left(
x\right)  \psi^{\left(  p+1\right)  }\left(  x\right)  \frac{t^{n}}{\left(
n-p\right)  !p!}.
\end{align*}
If we take $n-k$ instead of $n$ in the first summation in the right side of
this equation and then compare the coefficients of $\dfrac{t^{n}}{n!},$ we
complete the proof of $\left(  \ref{aa9}\right)  $ .

On the other hand, it is easily seen that $F(x,t)$~satisfies%
\[
\psi \left(  x+t\right)  \frac{\partial}{\partial x}F\left(  x,t\right)
=\left \{  \ln \alpha~\varphi_{1}^{\prime}\left(  x\right)  \psi \left(
x+t\right)  -\ln \beta~\varphi_{2}^{^{\prime}}\left(  x+t\right)  \psi \left(
x+t\right)  +\psi^{\prime}\left(  x+t\right)  \right \}  F\left(  x,t\right)
.
\]
In order to obtain $\left(  \ref{aa10}\right)  $, it is enough to make similar
calculations above.
\end{proof}

\begin{corollary}
\label{corollary2.1}Combining the recurrence relations in Theorem
\ref{theorem2.2} gives%
\[%
\begin{array}
[c]{c}%
%TCIMACRO{\dsum \limits_{p=0}^{n}}%
%BeginExpansion
{\displaystyle \sum \limits_{p=0}^{n}}
%EndExpansion
\left \{  \Theta_{n-p+1}\, \left(  x\right)  \psi^{(p)}\left(  x\right)
-\Theta_{n-p}\, \left(  x\right)  \psi^{(p+1)}\left(  x\right)  -\Theta
_{n-p}^{\prime}\, \left(  x\right)  \psi^{(p)}\left(  x\right)  \right. \\
\left.  +\ln \alpha~\varphi_{1}^{\prime}\left(  x\right)  \Theta_{n-p}\,
\left(  x\right)  \psi^{(p)}\left(  x\right)  +~\Theta_{n-p}\, \left(
x\right)  \psi^{(p+1)}\left(  x\right)  \right \}  \left(
\begin{array}
[c]{c}%
n\\
p
\end{array}
\right)  =0
\end{array}
\]
for $n\geq0.$
\end{corollary}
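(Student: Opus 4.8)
The plan is to exploit the fact that the two recurrence relations $\left(\ref{aa9}\right)$ and $\left(\ref{aa10}\right)$ of Theorem \ref{theorem2.2} have \emph{identical} right-hand sides; in both cases the right side equals
\[
-\ln\beta~{\textstyle\sum\limits_{k=0}^{n}}{\textstyle\sum\limits_{p=0}^{n-k}}\left(\begin{array}[c]{c}n\\k\end{array}\right)\left(\begin{array}[c]{c}n-k\\p\end{array}\right)\Theta_{n-p-k}\,\left(x\right)\psi^{(p)}\left(x\right)\varphi_{2}^{\left(k+1\right)}\left(x\right).
\]
Consequently, the single step I would take is to subtract equation $\left(\ref{aa10}\right)$ from equation $\left(\ref{aa9}\right)$: the right-hand sides cancel, and all that remains is the assertion that the difference of the two left-hand sides vanishes, which is precisely the content of the corollary.

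Carrying out that subtraction inside the common sum $\sum_{p=0}^{n}\binom{n}{p}\{\cdots\}$, I would keep the bracket $\Theta_{n-p+1}\,\psi^{(p)}-\Theta_{n-p}\,\psi^{(p+1)}$ contributed by $\left(\ref{aa9}\right)$ and subtract off the bracket $\Theta_{n-p}^{\prime}\,\psi^{(p)}-\ln\alpha~\varphi_{1}^{\prime}\,\Theta_{n-p}\,\psi^{(p)}-\Theta_{n-p}\,\psi^{(p+1)}$ contributed by $\left(\ref{aa10}\right)$. Distributing the minus sign over the latter yields $-\Theta_{n-p}^{\prime}\,\psi^{(p)}+\ln\alpha~\varphi_{1}^{\prime}\,\Theta_{n-p}\,\psi^{(p)}+\Theta_{n-p}\,\psi^{(p+1)}$, and collecting all five resulting terms reproduces exactly the bracket displayed in the statement; since the factor $\binom{n}{p}$ is common to every term, it factors out of the whole sum. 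This establishes the claimed identity for all $n\geq0$.

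There is no genuine obstacle here: the corollary is a purely formal consequence of the two recurrence relations already proved, and the only point requiring any care is the bookkeeping of signs when the subtraction is distributed across the brackets. I would also note that the pair of terms $\mp\Theta_{n-p}\,\psi^{(p+1)}$ appearing in the final expression is deliberately left uncancelled, so that the combined relation is written in a form that mirrors, term by term, the two relations $\left(\ref{aa9}\right)$ and $\left(\ref{aa10}\right)$ being subtracted; one may of course simplify it further by removing that cancelling pair.
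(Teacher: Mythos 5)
Your proof is correct and is exactly the paper's intended argument: since \eqref{aa9} and \eqref{aa10} share the same right-hand side, subtracting them cancels it and leaves precisely the stated identity (the paper offers no further proof beyond "combining the recurrence relations"). Your sign bookkeeping and the remark about the uncancelled pair $\mp\Theta_{n-p}\,\psi^{(p+1)}$ are both accurate.
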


\begin{theorem}
\label{theorem2.3}The family of analytic functions given by $\left(
\ref{aa4}\right)  $ satisfies%
\[
\Theta_{n}^{\prime}\, \left(  x\right)  =\Theta_{n+1}\, \left(  x\right)
+\ln \alpha~\varphi_{1}^{\prime}\left(  x\right)  \Theta_{n}\, \left(
x\right)
\]
for $n\geq0.$
\end{theorem}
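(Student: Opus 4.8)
The plan is to work directly from the closed form of the generating function
$F(x,t)=\alpha^{\varphi_{1}(x)}\psi(x+t)\beta^{-\varphi_{2}(x+t)}$
established in Theorem~\ref{theorem2.1}, and to exploit a simple first-order partial differential relation between $\partial F/\partial x$ and $\partial F/\partial t$. First I would observe that, since the only $x$-dependence of $F$ outside the factor $\alpha^{\varphi_{1}(x)}$ enters through the combined variable $x+t$, differentiating $F$ with respect to $t$ gives
\[
\frac{\partial}{\partial t}F(x,t)=\alpha^{\varphi_{1}(x)}\Big(\psi'(x+t)-\ln\beta\,\varphi_{2}'(x+t)\psi(x+t)\Big)\beta^{-\varphi_{2}(x+t)},
\]
while differentiating with respect to $x$ produces the same expression plus the extra term coming from $\tfrac{d}{dx}\alpha^{\varphi_{1}(x)}=\ln\alpha\,\varphi_{1}'(x)\,\alpha^{\varphi_{1}(x)}$. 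Hence
\[
\frac{\partial}{\partial x}F(x,t)=\frac{\partial}{\partial t}F(x,t)+\ln\alpha\,\varphi_{1}'(x)\,F(x,t).
\]

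Next I would substitute the series expansion $F(x,t)=\sum_{n\ge0}\Theta_{n}(x)\,t^{n}/n!$ from~(\ref{aa*}) into this identity. On the left, $\partial/\partial x$ acts termwise to give $\sum_{n\ge0}\Theta_{n}'(x)\,t^{n}/n!$; on the right, $\partial/\partial t$ shifts the index, yielding $\sum_{n\ge0}\Theta_{n+1}(x)\,t^{n}/n!$, and the last term contributes $\ln\alpha\,\varphi_{1}'(x)\sum_{n\ge0}\Theta_{n}(x)\,t^{n}/n!$. Equating the coefficients of $t^{n}/n!$ on both sides for each $n\ge0$ then gives exactly
\[
\Theta_{n}'(x)=\Theta_{n+1}(x)+\ln\alpha\,\varphi_{1}'(x)\,\Theta_{n}(x),
\]
which is the assertion.

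Alternatively — and this is essentially a reformulation of the same computation — one can read the result off the two recurrence relations already proved in Theorem~\ref{theorem2.2}: subtracting~(\ref{aa9}) from~(\ref{aa10}) (equivalently, invoking Corollary~\ref{corollary2.1}) leaves
\[
\sum_{p=0}^{n}\binom{n}{p}\Big\{\Theta_{n-p+1}(x)-\Theta_{n-p}'(x)+\ln\alpha\,\varphi_{1}'(x)\,\Theta_{n-p}(x)\Big\}\psi^{(p)}(x)=0,
\]
and an induction on $n$ (the $n=0$ case being immediate from $\psi^{(0)}=\psi\not\equiv0$, and the inductive step killing all terms with $p\ge1$ by the hypothesis) isolates the bracket at $p=0$ and gives the claim. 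I expect the generating-function argument to be cleaner, so I would present that one; the only point requiring a word of care is the legitimacy of differentiating the series~(\ref{aa*}) termwise in $x$ and in $t$, which is justified by the analyticity of $\varphi_{1},\varphi_{2},\psi$ and the local uniform convergence of the series guaranteed in the proof of Theorem~\ref{theorem2.1}. That is the only mild obstacle; the rest is bookkeeping.
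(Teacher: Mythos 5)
Your proposal is correct and follows essentially the same route as the paper: the paper's proof also derives the relation $\frac{\partial}{\partial x}F(x,t)=\frac{\partial}{\partial t}F(x,t)+\ln\alpha\,\varphi_{1}^{\prime}(x)F(x,t)$ for $F(x,t)=\alpha^{\varphi_{1}(x)}\psi(x+t)\beta^{-\varphi_{2}(x+t)}$ and then equates coefficients of $t^{n}/n!$ in the generating function (\ref{aa*}). The alternative argument you sketch via Theorem \ref{theorem2.2} is unnecessary, but your primary argument matches the paper's proof.
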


\begin{proof}
For the function $F\left(  x,t\right)  =\alpha^{\varphi_{1}\left(  x\right)
}\psi \left(  x+t\right)  \beta^{-\varphi_{2}\left(  x+t\right)  },$ the
following equation holds%
\begin{equation}
\frac{\partial}{\partial x}F\left(  x,t\right)  =\frac{\partial}{\partial
t}F\left(  x,t\right)  +\ln \alpha \text{ }\varphi_{1}^{\prime}\left(  x\right)
F\left(  x,t\right)  . \label{aa6}%
\end{equation}
If we take into consideration generating function (\ref{aa*}) in $\left(
\ref{aa6}\right)  ,$ we can write%
\[%
%TCIMACRO{\dsum \limits_{n=0}^{\infty}}%
%BeginExpansion
{\displaystyle \sum \limits_{n=0}^{\infty}}
%EndExpansion
\Theta_{n}^{\prime}\, \left(  x\right)  \frac{t^{n}}{n!}=%
%TCIMACRO{\dsum \limits_{n=0}^{\infty}}%
%BeginExpansion
{\displaystyle \sum \limits_{n=0}^{\infty}}
%EndExpansion
\Theta_{n+1}\, \left(  x\right)  \frac{t^{n}}{n!}+\ln \alpha \text{ }\varphi
_{1}^{\prime}\left(  x\right)
%TCIMACRO{\dsum \limits_{n=0}^{\infty}}%
%BeginExpansion
{\displaystyle \sum \limits_{n=0}^{\infty}}
%EndExpansion
\Theta_{n}\, \left(  x\right)  \frac{t^{n}}{n!}.
\]
By equating the coefficients of $\dfrac{t^{n}}{n!}$, it follows that%
\[
\Theta_{n}^{\prime}\, \left(  x\right)  =\Theta_{n+1}\, \left(  x\right)
+\ln \alpha~\varphi_{1}^{\prime}\left(  x\right)  \Theta_{n}\, \left(
x\right)
\]
for $n\geq0.$
\end{proof}

\begin{theorem}
\label{theorem2.4}An another recurrence relation for the analytic functions
$\Theta_{n}\, \left(  x\right)  $ is as follows%
\begin{equation}%
\begin{array}
[c]{l}%
%TCIMACRO{\dsum \limits_{p=0}^{n}}%
%BeginExpansion
{\displaystyle \sum \limits_{p=0}^{n}}
%EndExpansion
\left \{  \ln \alpha~\Theta_{n-p+1}\, \left(  x\right)  \psi^{(p)}\left(
x\right)  \varphi_{1}^{\prime}\left(  x\right)  +\Theta_{n-p+1}\, \left(
x\right)  \psi^{(p+1)}\left(  x\right)  \right. \\
\\
\left.  -\Theta_{n-p}^{\prime}\, \left(  x\right)  \psi^{(p+1)}\left(
x\right)  \right \}  \left(
\begin{array}
[c]{c}%
n\\
p
\end{array}
\right) \\
\\
+\ln \beta%
%TCIMACRO{\dsum \limits_{k=0}^{n}}%
%BeginExpansion
{\displaystyle \sum \limits_{k=0}^{n}}
%EndExpansion%
%TCIMACRO{\dsum \limits_{p=0}^{n-k}}%
%BeginExpansion
{\displaystyle \sum \limits_{p=0}^{n-k}}
%EndExpansion
\left \{  \left(
\begin{array}
[c]{c}%
n\\
k
\end{array}
\right)  \left(
\begin{array}
[c]{c}%
n-k\\
p
\end{array}
\right)  \Theta_{n-k-p}^{\prime}\, \left(  x\right)  \varphi_{2}%
^{(k+1)}\left(  x\right)  \psi^{(p)}\left(  x\right)  \right. \\
\\
\left.  -~\left(
\begin{array}
[c]{c}%
n\\
k
\end{array}
\right)  \left(
\begin{array}
[c]{c}%
n-k\\
p
\end{array}
\right)  \Theta_{n-k-p+1}\, \left(  x\right)  \psi^{(p)}\left(  x\right)
\varphi_{2}^{(k+1)}\left(  x\right)  \right \}  =0
\end{array}
\label{aa11}%
\end{equation}
for $n\geq0.$
\end{theorem}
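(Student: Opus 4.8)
The plan is to mimic the method used in Theorem \ref{theorem2.2}: start from a partial differential identity satisfied by $F(x,t)=\alpha^{\varphi_{1}(x)}\psi(x+t)\beta^{-\varphi_{2}(x+t)}$, substitute the generating function expansion $\left(\ref{aa*}\right)$ for each occurrence of $F$ and its derivatives, expand the coefficient functions of $x+t$ as Taylor series in $t$ about $t=0$, and then compare the coefficients of $t^{n}/n!$. The natural starting point here is to differentiate the identity $\left(\ref{aa7}\right)$, namely $\psi(x+t)\,\partial_{t}F = \{-\ln\beta\,\varphi_{2}'(x+t)\psi(x+t)+\psi'(x+t)\}F$, with respect to $x$, and then use $\left(\ref{aa6}\right)$ (i.e. $\partial_{x}F = \partial_{t}F + \ln\alpha\,\varphi_{1}'(x)F$) to trade the $x$-derivative on $\partial_{t}F$ for a $t$-derivative plus lower-order terms. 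Concretely, applying $\partial_{x}$ to $\left(\ref{aa7}\right)$ and using $\partial_{x}\partial_{t}F = \partial_{t}\partial_{x}F = \partial_{t}(\partial_{t}F+\ln\alpha\,\varphi_{1}'(x)F)$ produces an identity all of whose terms, after division by $\psi(x+t)$ where needed, are of the form (analytic function of $x+t$) $\times$ ($\partial_{t}$-derivative of $F$). This is exactly the shape required to read off a recurrence among the $\Theta_{n}$, their first derivatives, and the derivatives of $\psi$ and $\varphi_{2}$.

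The key steps, in order, would be: (1) write down the mixed identity obtained from $\partial_{x}\left(\ref{aa7}\right)$ together with $\left(\ref{aa6}\right)$, arranging it so that one side collects the terms carrying $\ln\alpha\,\varphi_{1}'$ and $\psi^{(j)}$ and the other side collects the terms carrying $\ln\beta\,\varphi_{2}^{(j)}$; (2) replace $F$, $\partial_{t}F$, $\partial_{x}F$ by their series $\sum \Theta_{n}t^{n}/n!$, $\sum \Theta_{n+1}t^{n}/n!$, $\sum \Theta_{n}'t^{n}/n!$ respectively; (3) insert the Taylor expansions $\psi(x+t)=\sum_{p}\psi^{(p)}(x)t^{p}/p!$, $\psi'(x+t)=\sum_{p}\psi^{(p+1)}(x)t^{p}/p!$, and $\varphi_{2}'(x+t)=\sum_{k}\varphi_{2}^{(k+1)}(x)t^{k}/k!$; (4) multiply out the resulting Cauchy products, invert the order of summation, shift the inner summation index by writing $n\mapsto n-p$ (and, in the double-sum terms, $n\mapsto n-k$ as well), so that every term becomes a coefficient of $t^{n}$; (5) equate the coefficient of $t^{n}/n!$ on both sides, which after multiplying through by $n!$ introduces the binomial factors $\binom{n}{p}$ in the single sums and $\binom{n}{k}\binom{n-k}{p}$ in the double sums, giving precisely $\left(\ref{aa11}\right)$.

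The main obstacle is purely bookkeeping: keeping the index shifts consistent across the single sums (which become $\sum_{p=0}^{n}$ with weight $\binom{n}{p}$) and the double sums (which become $\sum_{k=0}^{n}\sum_{p=0}^{n-k}$ with weight $\binom{n}{k}\binom{n-k}{p}$), and making sure that the term $\partial_{t}\partial_{t}F$ contributes $\Theta_{n-p+1}$ rather than $\Theta_{n-p}$ after the shift—this is what accounts for the subscripts $n-p+1$ and $n-k-p+1$ in $\left(\ref{aa11}\right)$. No new analytic input is needed beyond the termwise validity of these manipulations on the common domain of convergence established in Theorem \ref{theorem2.1}; once the identity at the level of $F$ is correctly arranged, the recurrence $\left(\ref{aa11}\right)$ falls out by matching coefficients exactly as in the proof of $\left(\ref{aa9}\right)$.
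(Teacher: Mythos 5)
Your steps (2)--(5) (the expansion and coefficient-matching machinery) are exactly what is needed, but the identity you propose to expand in step (1) is the wrong one, and this is a genuine gap. Differentiating $\left(\ref{aa7}\right)$ with respect to $x$ and using $\partial_x\partial_t F=\partial_t\bigl(\partial_t F+\ln\alpha\,\varphi_1'(x)F\bigr)$ produces an identity containing $\psi(x+t)\,\partial_t^2F$ and a term proportional to $F$ whose coefficient involves $\psi''(x+t)$ and $\varphi_2''(x+t)$. After inserting the series and Taylor expansions, these contribute terms of the shape $\Theta_{n-p+2}\,\psi^{(p)}$, $\Theta_{n-p}\,\psi^{(p+2)}$ and $\Theta_{n-k-p}\,\varphi_2^{(k+2)}\psi^{(p)}$, none of which occur in $\left(\ref{aa11}\right)$; so matching coefficients cannot give ``precisely $\left(\ref{aa11}\right)$.'' In fact your identity is nothing but $\partial_t\left(\ref{aa7}\right)$ plus $\ln\alpha\,\varphi_1'(x)$ times $\left(\ref{aa7}\right)$ (once $\left(\ref{aa6}\right)$ is used), so its expansion only reproduces consequences of $\left(\ref{aa9}\right)$ and Theorem \ref{theorem2.3}, not the new relation. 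Note also that your claim that every term can be brought to the form (function of $x+t$) times a $t$-derivative of $F$ conflicts with the target: $\left(\ref{aa11}\right)$ needs a genuine $\partial_x F$ contribution to produce the $\Theta_{n-p}'$ and $\Theta_{n-k-p}'$ terms.

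What is actually required is an algebraic elimination of $F$, with no further differentiation: multiply $\left(\ref{aa6}\right)$ by $\psi'(x+t)-\ln\beta\,\psi(x+t)\varphi_2'(x+t)$ and use $\left(\ref{aa7}\right)$ to replace this coefficient times $F$ by $\psi(x+t)\,\partial_t F$. This yields the first-order identity
\[
\left\{\ln\alpha\,\varphi_1'(x)\,\psi(x+t)-\ln\beta\,\psi(x+t)\,\varphi_2'(x+t)+\psi'(x+t)\right\}\frac{\partial}{\partial t}F(x,t)
=\left\{\psi'(x+t)-\ln\beta\,\psi(x+t)\,\varphi_2'(x+t)\right\}\frac{\partial}{\partial x}F(x,t),
\]
which is exactly the relation the paper starts from: it contains no $F$ term, no $\partial_t^2F$, and no second derivatives of $\psi$ or $\varphi_2$. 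Expanding this identity with the series $\partial_t F=\sum\Theta_{n+1}t^n/n!$, $\partial_x F=\sum\Theta_n' t^n/n!$ and the Taylor expansions of $\psi$, $\psi'$, $\varphi_2'$, and then comparing coefficients of $t^n/n!$ as in Theorem \ref{theorem2.2}, gives $\left(\ref{aa11}\right)$ with the stated binomial weights. With your step (1) replaced by this elimination, the rest of your argument goes through unchanged.
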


\begin{proof}
It is obvious from the generating function (\ref{aa*}) that%
\begin{align*}
&  \left \{  \ln \alpha~\varphi_{1}^{\prime}\left(  x\right)  \psi \left(
x+t\right)  -\ln \beta~\psi \left(  x+t\right)  \varphi_{2}^{\prime}\left(
x+t\right)  +\psi^{\prime}\left(  x+t\right)  \right \}  \frac{\partial
}{\partial t}F\left(  x,t\right) \\
&  =\left \{  \psi^{\prime}\left(  x+t\right)  -\ln \beta~\psi \left(
x+t\right)  \varphi_{2}^{\prime}\left(  x+t\right)  \right \}  \frac{\partial
}{\partial x}F\left(  x,t\right)
\end{align*}
from which, we can obtain the desired relation.
\end{proof}

\begin{corollary}
\label{corollary2.2}If we combine Theorem \ref{theorem2.3} and Theorem
\ref{theorem2.4} for the special case $\psi \left(  x\right)  =1,$ we obtain%
\[
\Theta_{n+1}\, \left(  x\right)  =-\ln \beta~\sum \limits_{k=0}^{n}\left(
\begin{array}
[c]{c}%
n\\
k
\end{array}
\right)  \varphi_{2}^{(k+1)}\left(  x\right)  \Theta_{n-k}\, \left(  x\right)
\]
for $n\geq0.$
\end{corollary}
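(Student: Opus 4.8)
The plan is to specialize the general recurrence of Theorem~\ref{theorem2.4} to the case $\psi(x)=1$ and then eliminate the derivatives $\Theta'_{m}(x)$ by means of Theorem~\ref{theorem2.3}. First I would record the only fact needed about the choice $\psi\equiv 1$: we have $\psi^{(0)}(x)=1$ while $\psi^{(p)}(x)=0$ for every $p\ge 1$. Substituting this into identity~(\ref{aa11}), I examine the first sum over $p$ term by term. Since $\psi^{(p+1)}(x)=0$ for all $p\ge 0$, the two terms containing $\psi^{(p+1)}$ vanish identically, and the term containing $\psi^{(p)}$ survives only for $p=0$, where $\binom{n}{0}=1$; thus the whole first sum collapses to $\ln\alpha\,\varphi_1'(x)\,\Theta_{n+1}(x)$. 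In the double sum over $k$ and $p$ only $p=0$ contributes, with $\binom{n-k}{0}=1$, so that piece becomes $\ln\beta\sum_{k=0}^{n}\binom{n}{k}\varphi_2^{(k+1)}(x)\bigl\{\Theta'_{n-k}(x)-\Theta_{n-k+1}(x)\bigr\}$. Hence~(\ref{aa11}) reduces to
\[
\ln\alpha\,\varphi_1'(x)\,\Theta_{n+1}(x)+\ln\beta\sum_{k=0}^{n}\binom{n}{k}\varphi_2^{(k+1)}(x)\bigl\{\Theta'_{n-k}(x)-\Theta_{n-k+1}(x)\bigr\}=0 .
\]

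Next I would invoke Theorem~\ref{theorem2.3}, which states $\Theta'_{m}(x)-\Theta_{m+1}(x)=\ln\alpha\,\varphi_1'(x)\,\Theta_{m}(x)$ for all $m\ge 0$; applying this with $m=n-k$ replaces each brace above by $\ln\alpha\,\varphi_1'(x)\,\Theta_{n-k}(x)$. Every term of the resulting identity then carries the common factor $\ln\alpha\,\varphi_1'(x)$, so I can pull it out to obtain
\[
\ln\alpha\,\varphi_1'(x)\left\{\Theta_{n+1}(x)+\ln\beta\sum_{k=0}^{n}\binom{n}{k}\varphi_2^{(k+1)}(x)\,\Theta_{n-k}(x)\right\}=0 .
\]
Because $\alpha\in\mathbb{R}^{+}\backslash\{1\}$ we have $\ln\alpha\neq 0$, and since $\varphi_1'$ is analytic and not identically zero its zero set is discrete, so the bracketed analytic expression vanishes identically. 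This is exactly the claimed relation $\Theta_{n+1}(x)=-\ln\beta\sum_{k=0}^{n}\binom{n}{k}\varphi_2^{(k+1)}(x)\,\Theta_{n-k}(x)$ for $n\ge 0$.

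The only genuinely delicate point is this last cancellation: one must justify removing the factor $\ln\alpha\,\varphi_1'(x)$, which is why I appeal to $\ln\alpha\neq 0$ together with the analyticity of $\varphi_1$; everything else is routine bookkeeping of how the double sums in~(\ref{aa11}) degenerate once $\psi\equiv 1$. As an independent check, the same formula can be read off directly from the generating function~(\ref{aa*}) with $\psi\equiv 1$: differentiating $F(x,t)=\alpha^{\varphi_1(x)}\beta^{-\varphi_2(x+t)}$ in $t$ gives $\partial_t F=-\ln\beta\,\varphi_2'(x+t)F$, and expanding $\varphi_2'(x+t)=\sum_{k\ge 0}\varphi_2^{(k+1)}(x)\,t^{k}/k!$ and comparing coefficients of $t^{n}/n!$ yields precisely the target identity, confirming the corollary.
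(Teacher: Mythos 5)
Your proof is correct and follows exactly the route the paper indicates for this corollary: specialize relation (\ref{aa11}) of Theorem \ref{theorem2.4} to $\psi\equiv 1$ (so only $p=0$ survives and all $\psi^{(p+1)}$ terms drop), use Theorem \ref{theorem2.3} to replace $\Theta_{n-k}'(x)-\Theta_{n-k+1}(x)$ by $\ln\alpha\,\varphi_{1}'(x)\,\Theta_{n-k}(x)$, and cancel the common factor $\ln\alpha\,\varphi_{1}'(x)$. Your extra care at the cancellation step is well placed---it genuinely requires $\varphi_{1}$ non-constant, a hypothesis the corollary omits but the paper assumes in its neighboring theorems---and your generating-function check (or, equivalently, relation (\ref{aa9}) with $\psi\equiv1$) establishes the identity unconditionally, covering even that degenerate case.
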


Now, by means of the recurrence relations given above, we\ show that some
special cases of the analytic functions given by $\left(  \ref{aa4}\right)  $
are solutions of a differential equation.

Let $\psi \left(  x\right)  =1.$ Then from $\left(  \ref{aa4}\right)  ,$\ we
have
\begin{equation}
\Theta_{n}\, \left(  x\right)  =\alpha^{\varphi_{1}\left(  x\right)  }%
\frac{d^{n}}{dx^{n}}\beta^{-\varphi_{2}\left(  x\right)  }, \label{4}%
\end{equation}
which verifies the generating function
\[
\sum \limits_{n=0}^{\infty}\frac{\Theta_{n}\, \left(  x\right)  }{n!}%
t^{n}=\alpha^{\varphi_{1}\left(  x\right)  }\beta^{-\varphi_{2}\left(
x+t\right)  }.
\]

\begin{theorem}
\label{theorem2.55}Let $\varphi_{2}\left(  x\right)  $ be a polynomial of
degree $2$ and $\varphi_{1}\left(  x\right)  ,$ not constant, be an analytic
function. For the functions $y=\Theta_{n}\, \left(  x\right)  $ defined by
$\left(  \ref{4}\right)  ,$ one easily gets%
\[%
\begin{array}
[c]{l}%
y^{\prime \prime}+\left \{  \ln \beta~\varphi_{2}^{\prime}\left(  x\right)
-2\ln \alpha~\varphi_{1}^{\prime}\left(  x\right)  \right \}  y^{\prime
}+\left \{  \ln^{2}\alpha~\left(  \varphi_{1}^{\prime}\left(  x\right)
\right)  ^{2}-\ln \alpha~\varphi_{1}^{\prime \prime}\left(  x\right)  \right. \\
\\
\left.  -\ln \alpha~\ln \beta~\varphi_{1}^{\prime}\left(  x\right)  \varphi
_{2}^{\prime}\left(  x\right)  +\left(  n+1\right)  \ln \beta~\varphi
_{2}^{\prime \prime}\left(  x\right)  \right \}  y=0.
\end{array}
\]

\end{theorem}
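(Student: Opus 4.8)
The plan is to eliminate the neighbours $\Theta_{n+1}$ and $\Theta_{n-1}$ from the recurrences already established, leaving a closed second-order relation in $\Theta_n$ alone. Setting $\psi\equiv 1$ in Theorem \ref{theorem2.3} gives the first-order ladder relation
\[
\Theta_{n+1}\,(x)=\Theta_n'\,(x)-\ln\alpha\,\varphi_1'(x)\,\Theta_n\,(x),
\]
while Corollary \ref{corollary2.2} gives $\Theta_{n+1}(x)=-\ln\beta\sum_{k=0}^{n}\binom{n}{k}\varphi_2^{(k+1)}(x)\Theta_{n-k}(x)$. Since $\varphi_2$ has degree $2$, every derivative $\varphi_2^{(k+1)}$ with $k\geq 2$ vanishes, so this collapses to the three-term recurrence
\[
\Theta_{n+1}\,(x)=-\ln\beta\,\varphi_2'(x)\,\Theta_n\,(x)-n\ln\beta\,\varphi_2''(x)\,\Theta_{n-1}\,(x).
\]

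First I would equate the two expressions for $\Theta_{n+1}$; because $\varphi_2''$ is a nonzero constant, for $n\geq 1$ this can be solved for $\Theta_{n-1}$ in terms of $\Theta_n$ and $\Theta_n'$, namely
\[
n\ln\beta\,\varphi_2''\,\Theta_{n-1}=-\Theta_n'+\bigl(\ln\alpha\,\varphi_1'-\ln\beta\,\varphi_2'\bigr)\Theta_n .
\]
Next I would feed this back into the ladder relation written at index $n-1$, that is $\Theta_n=\Theta_{n-1}'-\ln\alpha\,\varphi_1'\,\Theta_{n-1}$, after multiplying through by the constant $n\ln\beta\,\varphi_2''$. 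Differentiating the resulting right-hand side — which is where $\Theta_n''$ enters — and collecting the coefficients of $\Theta_n''$, $\Theta_n'$ and $\Theta_n$ yields exactly the asserted differential equation; the coefficient $(n+1)\ln\beta\,\varphi_2''$ appears as the $n$ carried by the prefactor together with the extra $1$ produced by $\frac{d}{dx}\bigl(-\ln\beta\,\varphi_2'\Theta_n\bigr)$, which contributes $-\ln\beta\,\varphi_2''\Theta_n$.

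The step that demands the most care is purely the bookkeeping in this final substitution and differentiation — tracking the products $\varphi_1'\varphi_2'$, $(\varphi_1')^2$ and the $\varphi_1''$, $\varphi_2''$ terms so that nothing is lost when the pieces are combined; there is no genuine analytic obstacle, since $\varphi_1,\varphi_2$ are analytic and $\varphi_2''$ is constant. The only case excluded by the division above is $n=0$, which is handled separately: Corollary \ref{corollary2.2} and Theorem \ref{theorem2.3} at $n=0$ give the first-order relation $\Theta_0'=\bigl(\ln\alpha\,\varphi_1'-\ln\beta\,\varphi_2'\bigr)\Theta_0$, and differentiating it once and re-substituting reproduces the $n=0$ instance of the stated ODE.
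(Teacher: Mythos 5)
Your proof is correct and follows essentially the paper's route: the paper's entire proof of this theorem is ``combine Theorem \ref{theorem2.3} and Theorem \ref{theorem2.4}'', and your use of Corollary \ref{corollary2.2} (which is exactly that combination for $\psi\equiv 1$) together with the ladder relation is the same elimination argument, and your bookkeeping does produce the stated coefficients, including $(n+1)\ln\beta\,\varphi_2''$. The only cosmetic difference is that you eliminate downward, solving for $\Theta_{n-1}$ and differentiating (which forces the separate $n=0$ check), whereas the paper's template, carried out explicitly for the degree-3 analogue in Theorem \ref{theorem2.5}, shifts the index upward and expresses $\Theta_{n+1},\Theta_{n+2}$ in terms of $\Theta_n$ and its derivatives, avoiding any division by $n$.
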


\begin{proof}
To prove this theorem, it is enough to combine Theorem \ref{theorem2.3} and
Theorem \ref{theorem2.4}.
\end{proof}

\begin{example}
In the special case $\varphi_{1}\left(  x\right)  =\varphi_{2}\left(
x\right)  =x^{2}$,~$\alpha=\beta=e,$ it is obvious that $\Theta_{n}\, \left(
x\right)  =\left(  -1\right)  ^{n}H_{n}\, \left(  x\right)  $ satisfies
Hermite differential equation%
\[
y^{\prime \prime}-2xy^{\prime}+2ny=0.
\]

\end{example}

\begin{theorem}
\label{theorem2.5}Assume that $\varphi_{2}\left(  x\right)  $ is a polynomial
of degree $3$ and $\varphi_{1}\left(  x\right)  $ is an analytic function
which is not constant. Then the family of functions $y=\Theta_{n}\, \left(
x\right)  $ satisfies the third order linear differential equation%

\[%
\begin{array}
[c]{l}%
y^{\prime \prime \prime}+\left \{  \ln \beta~\varphi_{2}^{\prime}\left(  x\right)
-3\ln \alpha~\varphi_{1}^{\prime}\left(  x\right)  \right \}  y^{\prime \prime
}+\left \{  \left(  n+2\right)  \ln \beta~\varphi_{2}^{\prime \prime}\left(
x\right)  -3\ln \alpha~\varphi_{1}^{\prime \prime}\left(  x\right)  \right. \\
\\
\left.  -2\ln \alpha~\ln \beta~\varphi_{1}^{\prime}\left(  x\right)  \varphi
_{2}^{\prime}\left(  x\right)  +3\ln^{2}\alpha~\left(  \varphi_{1}^{\prime
}\left(  x\right)  \right)  ^{2}\right \}  y^{\prime}+\left \{  -\ln^{3}%
\alpha~\left(  \varphi_{1}^{\prime}\left(  x\right)  \right)  ^{3}\right. \\
\\
\left.  +\ln^{2}\alpha \ln \beta~\varphi_{2}^{\prime}\left(  x\right)  \left(
\varphi_{1}^{\prime}\left(  x\right)  \right)  ^{2}+3\ln^{2}\alpha~\varphi
_{1}^{\prime}\left(  x\right)  \varphi_{1}^{\prime \prime}\left(  x\right)
-\ln \alpha~\varphi_{1}^{\prime \prime \prime}\left(  x\right)  \right. \\
\\
\left.  -\ln \alpha~\ln \beta~\varphi_{1}^{\prime \prime}\left(  x\right)
\varphi_{2}^{\prime}\left(  x\right)  -\left(  n+2\right)  \ln \alpha~\ln
\beta~\varphi_{2}^{\prime \prime}\left(  x\right)  \varphi_{1}^{\prime}\left(
x\right)  \right. \\
\\
\left.  +\frac{1}{2}\left(  n+1\right)  \left(  n+2\right)  \ln \beta
~\varphi_{2}^{\prime \prime \prime}\left(  x\right)  \right \}  y=0.\\
\end{array}
\]

\end{theorem}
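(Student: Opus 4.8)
The plan is to push the elimination behind Theorem \ref{theorem2.55} one order further. Write $a=\ln\alpha$ and $b=\ln\beta$, and let $L=\dfrac{d}{dx}-a\,\varphi_1'(x)$ be the first-order operator of Theorem \ref{theorem2.3}, so that that theorem reads $\Theta_{n+1}=L\Theta_n$ and, iterating, $\Theta_{n+j}=L^{j}\Theta_n$ for every $j\ge 0$. On the other hand, Corollary \ref{corollary2.2} gives the recurrence in the index $\Theta_{m+1}=-b\sum_{k=0}^{m}\binom{m}{k}\varphi_2^{(k+1)}\Theta_{m-k}$ (this is exactly the "combination of Theorems \ref{theorem2.3} and \ref{theorem2.4}" referred to in the statement), and since $\varphi_2$ is a cubic polynomial, $\varphi_2^{(k+1)}\equiv 0$ for $k\ge 3$; taking $m=n+2$ therefore collapses this to
\[
\Theta_{n+3}=-b\Bigl[\varphi_2'\,\Theta_{n+2}+(n+2)\,\varphi_2''\,\Theta_{n+1}+\tfrac{(n+1)(n+2)}{2}\,\varphi_2'''\,\Theta_n\Bigr].
\]

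Substituting $\Theta_{n+1}=L\Theta_n$, $\Theta_{n+2}=L^{2}\Theta_n$, $\Theta_{n+3}=L^{3}\Theta_n$ converts the last display into the single operator identity
\[
\Bigl[\,L^{3}+b\,\varphi_2'\,L^{2}+(n+2)\,b\,\varphi_2''\,L+\tfrac{(n+1)(n+2)}{2}\,b\,\varphi_2'''\,\Bigr]\Theta_n=0 .
\]
Because $\varphi_1$ is analytic and (as $\deg\varphi_2=3$) $\varphi_2'''$ is constant, the bracket is a genuine third-order linear differential operator with analytic coefficients, so this already is the asserted equation; it only remains to put the operator in standard form. I would expand
\[
L^{2}=\frac{d^{2}}{dx^{2}}-2a\varphi_1'\,\frac{d}{dx}+\bigl(a^{2}(\varphi_1')^{2}-a\varphi_1''\bigr),
\]
\[
L^{3}=\frac{d^{3}}{dx^{3}}-3a\varphi_1'\,\frac{d^{2}}{dx^{2}}+\bigl(3a^{2}(\varphi_1')^{2}-3a\varphi_1''\bigr)\frac{d}{dx}+\bigl(3a^{2}\varphi_1'\varphi_1''-a\varphi_1'''-a^{3}(\varphi_1')^{3}\bigr),
\]
keeping in mind that $L$ does not commute with multiplication by $\varphi_1'$ (this is where the $\varphi_1''$ and $\varphi_1'''$ terms come from), then insert these into the bracket and collect the coefficients of $y'''$, $y''$, $y'$, $y$ with $y=\Theta_n$.

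The genuinely laborious and error-prone step is this last one, merging the four groups $L^{3}$, $b\varphi_2'L^{2}$, $(n+2)b\varphi_2''L$ and $\tfrac{(n+1)(n+2)}{2}b\varphi_2'''$ into a single expression; nothing conceptual remains, only heavy bookkeeping. Two checks keep it honest. First, running the same scheme under the weaker hypothesis $\deg\varphi_2=2$ (so $\varphi_2'''=0$) reduces the bracket, via the operator identity $L\varphi_2'=\varphi_2''+\varphi_2'L$ together with $\varphi_2''$ being constant, to $L\circ\bigl[L^{2}+b\varphi_2'L+(n+1)b\varphi_2''\bigr]$, i.e. $L$ applied after the operator of Theorem \ref{theorem2.55}, hence it annihilates $\Theta_n$ in agreement with that theorem. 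Second, on specializing $\alpha=\beta=e$ and $\varphi_1(x)=\varphi_2(x)=x^{2}$ all the $\varphi_1'''$, $\varphi_2'''$ contributions drop out and the resulting third-order equation is automatically satisfied by $\Theta_n=(-1)^{n}H_n$, since that function already solves the Hermite equation of the Example.
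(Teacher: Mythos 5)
Your proposal is correct and takes essentially the same route as the paper: the paper likewise specializes Theorem \ref{theorem2.4} to $\psi\equiv1$ with $\deg\varphi_{2}=3$, shifts the index to $n+2$, and substitutes the iterates of Theorem \ref{theorem2.3} (your expansions of $L^{2}\Theta_{n}$ and $L^{3}\Theta_{n}$ are exactly its displayed formulas for $\Theta_{n+2}$ and $\Theta_{n+3}$), after which the same collection of coefficients yields the stated equation. Your only deviation is starting from Corollary \ref{corollary2.2} instead of the relation (\ref{aa12}), which merely removes the overall factor $\ln\alpha\,\varphi_{1}^{\prime}(x)$ carried in the paper's version and slightly lightens the bookkeeping.
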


\begin{proof}
As $\psi \left(  x\right)  =1$ and $\varphi_{2}\left(  x\right)  $ is a
polynomial of degree $3,$ then the relation given by $\left(  \ref{aa11}%
\right)  $ reduces to%
\[%
\begin{array}
[c]{l}%
\ln \alpha~\Theta_{n+1}\, \left(  x\right)  \varphi_{1}^{\prime}\left(
x\right)  +\ln \beta \left \{  \varphi_{2}^{\prime}\left(  x\right)  \left(
\Theta_{n}^{\prime}\, \left(  x\right)  -~\Theta_{n+1}\, \left(  x\right)
\right)  \right. \\
\\
\left.  +\varphi_{2}^{\prime \prime}\left(  x\right)  \left(  \Theta
_{n-1}^{\prime}\, \left(  x\right)  -~\Theta_{n}\, \left(  x\right)  \right)
n\right. \\
\\
\left.  +\frac{1}{2}n\left(  n-1\right)  \varphi_{2}^{\prime \prime \prime
}\left(  x\right)  \left(  \Theta_{n-2}^{\prime}\, \left(  x\right)
-~\Theta_{n-1}\, \left(  x\right)  \right)  \right \}  =0
\end{array}
\]
Replacing $n$ by $n+2$, \ it follows that%
\begin{equation}%
\begin{array}
[c]{l}%
\ln \alpha~\Theta_{n+3}\, \left(  x\right)  \varphi_{1}^{\prime}\left(
x\right)  +\ln \beta \left \{  \varphi_{2}^{\prime}\left(  x\right)  \left(
\Theta_{n+2}^{\prime}\, \left(  x\right)  -~\Theta_{n+3}\, \left(  x\right)
\right)  \right. \\
\\
\left.  +\varphi_{2}^{\prime \prime}\left(  x\right)  \left(  \Theta
_{n+1}^{\prime}\, \left(  x\right)  -~\Theta_{n+2}\, \left(  x\right)
\right)  \left(  n+2\right)  \right. \\
\\
\left.  +\frac{1}{2}\left(  n+1\right)  \left(  n+2\right)  \varphi
_{2}^{\prime \prime \prime}\left(  x\right)  \left(  \Theta_{n}^{\prime}\,
\left(  x\right)  -~\Theta_{n+1}\, \left(  x\right)  \right)  \right \}  =0.
\end{array}
\label{aa12}%
\end{equation}
On the other hand, we have the following recurrence relation from Theorem
\ref{theorem2.3}%
\[
\Theta_{n+1}\, \left(  x\right)  =\Theta_{n}^{\prime}\, \left(  x\right)
-\ln \alpha~\varphi_{1}^{\prime}\left(  x\right)  \Theta_{n}\, \left(
x\right)  ,
\]
from which, by repeating this recurrence relation, we can obtain%

\begin{align*}
\Theta_{n+1}^{\prime}\, \left(  x\right)   &  =\Theta_{n}^{\prime \prime}\,
\left(  x\right)  -\ln \alpha~\varphi_{1}^{\prime}\left(  x\right)  \Theta
_{n}^{\prime}\, \left(  x\right)  -\ln \alpha~\varphi_{1}^{\prime \prime}\left(
x\right)  \Theta_{n}\, \left(  x\right)  ,\\
& \\
\Theta_{n+2}\, \left(  x\right)   &  =\Theta_{n}^{\prime \prime}\, \left(
x\right)  -2\ln \alpha~\varphi_{1}^{\prime}\left(  x\right)  \Theta_{n}%
^{\prime}\, \left(  x\right)  +\left \{  \ln^{2}\alpha \left(  \varphi
_{1}^{\prime}\left(  x\right)  \right)  ^{2}-\ln \alpha~\varphi_{1}%
^{\prime \prime}\left(  x\right)  \right \}  \Theta_{n}\, \left(  x\right)  ,\\
& \\
\Theta_{n+2}^{\prime}\, \left(  x\right)   &  =\Theta_{n}^{\prime \prime \prime
}\, \left(  x\right)  -2\ln \alpha~\varphi_{1}^{\prime}\left(  x\right)
\Theta_{n}^{\prime \prime}\, \left(  x\right)  +\left \{  \ln^{2}\alpha \left(
\varphi_{1}^{\prime}\left(  x\right)  \right)  ^{2}-3\ln \alpha~\varphi
_{1}^{\prime \prime}\left(  x\right)  \right \}  \Theta_{n}^{\prime}\, \left(
x\right) \\
&  +\left \{  2\ln^{2}\alpha~\varphi_{1}^{\prime}\left(  x\right)  \varphi
_{1}^{\prime \prime}\left(  x\right)  -\ln \alpha~\varphi_{1}^{\prime
\prime \prime}\left(  x\right)  \right \}  \Theta_{n}\, \left(  x\right)  ,\\
& \\
\Theta_{n+3}\, \left(  x\right)   &  =\Theta_{n}^{\prime \prime \prime}\,
\left(  x\right)  -3\ln \alpha~\varphi_{1}^{\prime}\left(  x\right)  \Theta
_{n}^{\prime \prime}\, \left(  x\right)  +\left \{  3\ln^{2}\alpha~\left(
\varphi_{1}^{\prime}\left(  x\right)  \right)  ^{2}-3\ln \alpha~\varphi
_{1}^{\prime \prime}\left(  x\right)  \right \}  \Theta_{n}^{\prime}\, \left(
x\right) \\
&  +\left \{  -\ln^{3}\alpha~\left(  \varphi_{1}^{\prime}\left(  x\right)
\right)  ^{3}+3\ln^{2}\alpha~\varphi_{1}^{\prime}\left(  x\right)  \varphi
_{1}^{\prime \prime}\left(  x\right)  -\ln \alpha~\varphi_{1}^{\prime
\prime \prime}\left(  x\right)  \right \}  \Theta_{n}\, \left(  x\right)  .
\end{align*}
Taking into account these equalities in $\left(  \ref{aa12}\right)  $ leads to
the desired third order linear differential equation .
\end{proof}

Similarly, as a result of Theorem \ref{theorem2.3} and Theorem
\ref{theorem2.4}, it is possible to give the next theorem.

\begin{theorem}
\label{theorem2.6}Let $\varphi_{2}\left(  x\right)  $ be a polynomial of
degree $4$ and $\varphi_{1}\left(  x\right)  ,$ not constant, be an analytic
function. The functions$\ y=\Theta_{n}\, \left(  x\right)  $ are solutions of
the fourth order linear differential equation%
\[%
\begin{array}
[c]{l}%
y^{\left(  iv\right)  }+\left \{  \ln \beta~\varphi_{2}^{\prime}\left(
x\right)  -4\ln \alpha~\varphi_{1}^{\prime}\left(  x\right)  \right \}
y^{\prime \prime \prime}+\left \{  6\ln^{2}\alpha~\left(  \varphi_{1}^{\prime
}\left(  x\right)  \right)  ^{2}\right. \\
\\
\left.  -3\ln \alpha~\ln \beta~\varphi_{1}^{\prime}\left(  x\right)  \varphi
_{2}^{\prime}\left(  x\right)  -6\ln \alpha~\varphi_{1}^{\prime \prime}\left(
x\right)  +\left(  n+3\right)  \ln \beta~\varphi_{2}^{\prime \prime}\left(
x\right)  \right \}  y^{\prime \prime}\\
\\
+\left \{  -4\ln^{3}\alpha~\left(  \varphi_{1}^{\prime}\left(  x\right)
\right)  ^{3}+3\ln^{2}\alpha \ln \beta~\varphi_{2}^{\prime}\left(  x\right)
\left(  \varphi_{1}^{\prime}\left(  x\right)  \right)  ^{2}+12\ln^{2}%
\alpha~\varphi_{1}^{\prime}\left(  x\right)  \varphi_{1}^{\prime \prime}\left(
x\right)  \right. \\
\\
\left.  -4\ln \alpha~\varphi_{1}^{\prime \prime \prime}\left(  x\right)
-3\ln \alpha~\ln \beta~\varphi_{1}^{\prime \prime}\left(  x\right)  \varphi
_{2}^{\prime}\left(  x\right)  -2\left(  n+3\right)  \ln \alpha~\ln
\beta~\varphi_{2}^{\prime \prime}\left(  x\right)  \varphi_{1}^{\prime}\left(
x\right)  \right. \\
\\
\left.  +\frac{1}{2}\left(  n+2\right)  \left(  n+3\right)  \ln \beta
~\varphi_{2}^{\prime \prime \prime}\left(  x\right)  \right \}  y^{\prime
}+\left \{  \ln^{4}\alpha~\left(  \varphi_{1}^{\prime}\left(  x\right)
\right)  ^{4}-\ln^{3}\alpha \ln \beta~\left(  \varphi_{1}^{\prime}\left(
x\right)  \right)  ^{3}\varphi_{2}^{\prime}\left(  x\right)  \right. \\
\\
\left.  -6\ln^{3}\alpha~\left(  \varphi_{1}^{\prime}\left(  x\right)  \right)
^{2}\varphi_{1}^{\prime \prime}\left(  x\right)  +4\ln^{2}\alpha~\varphi
_{1}^{\prime}\left(  x\right)  \varphi_{1}^{\prime \prime \prime}\left(
x\right)  +3\ln^{2}\alpha \ln \beta~\varphi_{1}^{\prime}\left(  x\right)
\varphi_{1}^{\prime \prime}\left(  x\right)  \varphi_{2}^{\prime}\left(
x\right)  \right. \\
\\
\left.  -\ln \alpha~\ln \beta~\varphi_{1}^{\prime \prime \prime}\left(  x\right)
\varphi_{2}^{\prime}\left(  x\right)  -3\ln^{2}\alpha~\left(  \varphi
_{1}^{\prime \prime}\left(  x\right)  \right)  ^{2}-\ln \alpha~\varphi
_{1}^{\left(  iv\right)  }\left(  x\right)  \right. \\
\\
\left.  +\left(  n+3\right)  \ln^{2}\alpha \ln \beta \left(  \varphi_{1}^{\prime
}\left(  x\right)  \right)  ^{2}\varphi_{2}^{\prime \prime}\left(  x\right)
-\left(  n+3\right)  \ln \alpha~\ln \beta~\varphi_{1}^{\prime \prime}\left(
x\right)  \varphi_{2}^{\prime \prime}\left(  x\right)  \right. \\
\\
\left.  -\frac{1}{2}\left(  n+2\right)  \left(  n+3\right)  \ln \alpha~\ln
\beta~\varphi_{1}^{\prime}\left(  x\right)  \varphi_{2}^{\prime \prime \prime
}\left(  x\right)  +\frac{1}{6}\left(  n+1\right)  \left(  n+2\right)  \left(
n+3\right)  \ln \beta~\varphi_{2}^{\left(  iv\right)  }\left(  x\right)
\right \}  y=0
\end{array}
\]

\end{theorem}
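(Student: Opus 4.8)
The plan is to follow exactly the route used for Theorem \ref{theorem2.5}, only carrying the bookkeeping one order higher. First I would specialize the recurrence (\ref{aa11}) to $\psi(x)=1$: then $\psi^{(0)}=1$ and $\psi^{(p)}=0$ for $p\ge 1$, so in every sum only the term $p=0$ survives; and since $\varphi_{2}$ has degree $4$ we have $\varphi_{2}^{(k+1)}\equiv 0$ for $k\ge 4$, so the double sum collapses to $k=0,1,2,3$. This produces a finite identity linking $\Theta_{n-3},\dots,\Theta_{n+1}$ and the first derivatives $\Theta_{n-3}^{\prime},\dots,\Theta_{n}^{\prime}$, with coefficients built from $\varphi_{1}^{\prime}$, from $\varphi_{2}^{\prime},\varphi_{2}^{\prime\prime},\varphi_{2}^{\prime\prime\prime},\varphi_{2}^{(iv)}$, from $\ln\alpha,\ln\beta$, and from the binomials $\binom{n}{1},\binom{n}{2},\binom{n}{3}$. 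Replacing $n$ by $n+3$ re-indexes this identity so that the highest object that appears is $\Theta_{n+4}$ and everything is expressed through $\Theta_{n},\Theta_{n+1},\dots,\Theta_{n+4}$ together with $\Theta_{n}^{\prime},\Theta_{n+1}^{\prime},\Theta_{n+2}^{\prime},\Theta_{n+3}^{\prime}$; here $\binom{n+3}{1}=n+3$, $\binom{n+3}{2}=\tfrac12(n+2)(n+3)$ and $\binom{n+3}{3}=\tfrac16(n+1)(n+2)(n+3)$, which accounts for the polynomial prefactors in $n$ that appear in the final statement.

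Next I would invoke Theorem \ref{theorem2.3} in the form $\Theta_{m+1}=\Theta_{m}^{\prime}-\ln\alpha\,\varphi_{1}^{\prime}\,\Theta_{m}$ and iterate it, exactly as in the proof of Theorem \ref{theorem2.5}, to write each of $\Theta_{n+1},\Theta_{n+2},\Theta_{n+3},\Theta_{n+4}$ and each of $\Theta_{n+1}^{\prime},\Theta_{n+2}^{\prime},\Theta_{n+3}^{\prime}$ as a linear combination of $y=\Theta_{n},y^{\prime},y^{\prime\prime},y^{\prime\prime\prime},y^{(iv)}$ whose coefficients are polynomials in $\varphi_{1}^{\prime},\varphi_{1}^{\prime\prime},\varphi_{1}^{\prime\prime\prime},\varphi_{1}^{(iv)}$ and $\ln\alpha$. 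The lower-order members of this list are already recorded in the proof of Theorem \ref{theorem2.5}, so in practice one only has to differentiate those expansions once more and apply the recurrence one further time to obtain $\Theta_{n+4}$ and $\Theta_{n+3}^{\prime}$. Substituting all of these into the re-indexed identity and then collecting the coefficients of $y^{(iv)},y^{\prime\prime\prime},y^{\prime\prime},y^{\prime},y$ yields the stated fourth-order linear differential equation.

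The only genuine obstacle is the volume of algebra: one must expand products such as $\varphi_{2}^{\prime}(x)\,\Theta_{n+4}(x)$ and $\varphi_{2}^{\prime\prime}(x)\,\Theta_{n+3}^{\prime}(x)$, then merge all contributions while keeping track of the powers of $\ln\alpha$ and $\ln\beta$ and of the factors $(n+3)$, $(n+2)(n+3)$, $(n+1)(n+2)(n+3)$ coming from the binomials. There is no new idea beyond Theorem \ref{theorem2.5}; the computation is direct, if lengthy, and the result can be sanity-checked by letting $\deg\varphi_{2}\le 3$ (which must recover Theorem \ref{theorem2.5}) and, with $\varphi_{1}=\varphi_{2}=x^{2}$, $\alpha=\beta=e$, by checking consistency with the Hermite case discussed above.
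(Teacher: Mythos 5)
Your proposal is correct and matches the paper's intended argument: the paper gives no separate computation for Theorem \ref{theorem2.6} but states that it follows, exactly as Theorem \ref{theorem2.5}, by combining Theorem \ref{theorem2.3} and Theorem \ref{theorem2.4} — i.e. specializing (\ref{aa11}) to $\psi(x)=1$ with $\deg\varphi_{2}=4$, shifting $n\mapsto n+3$ (whence the factors $n+3$, $\tfrac12(n+2)(n+3)$, $\tfrac16(n+1)(n+2)(n+3)$), and eliminating $\Theta_{n+1},\dots,\Theta_{n+4}$ and their derivatives via iterates of $\Theta_{m+1}=\Theta_{m}^{\prime}-\ln\alpha\,\varphi_{1}^{\prime}\,\Theta_{m}$. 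Your bookkeeping of which terms survive and which expansions must be extended one order beyond those listed in the proof of Theorem \ref{theorem2.5} is accurate, so the remaining work is indeed only the routine collection of coefficients.
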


\begin{example}
For example, taking $\varphi_{1}\left(  x\right)  =\varphi_{2}\left(
x\right)  =-x^{4}$ ,~$\alpha=\beta=e$ in the equation $\left(  \ref{4}\right)
,$ we have
\[
\Theta_{n}\, \left(  x\right)  =e^{-x^{4}}\frac{d^{n}}{dx^{n}}\left(
e^{x^{4}}\right)  \, \, \, \, \, \,,\, \, \, \, \, \, \, \, \,n=0,1,2...,
\]
which gives a family of polynomials of degree $3n.$ Theorem \ref{theorem2.6}
shows that these polynomials are solutions of the following fourth order
linear differential equation%
\[%
\begin{array}
[c]{l}%
\,y^{\left(  4\right)  }+12x^{3}y^{\prime \prime \prime}+\left \{  48x^{6}%
-12\left(  n-3\right)  x^{2}\right \}  y^{\prime \prime}\\
\\
+\left \{  64x^{9}+\left(  144-96n\right)  x^{5}-12\left(  n^{2}+5n-2\right)
x\right \}  y^{\prime}\\
\\
+\left \{  -192nx^{8}-48\left(  n^{2}+8n\right)  x^{4}-4n\left(  n^{2}%
+6n+11\right)  \right \}  y=0.
\end{array}
\]

\begin{remark}
We observe that if $\varphi_{2}\left(  x\right)  $ is a polynomial of degree
$m$ $(m\leq n)$ and $\varphi_{1}\left(  x\right)  $ is an analytic function,
not constant, then the functions$\ y=\Theta_{n}\, \left(  x\right)  $ satisfy
the $m$-th order linear differential equation. But, it is complicated to
obtain its explicit form.
\end{remark}
\end{example}

\section{Bilinear and Bilateral Generating Functions}

In this section, we \ try to derive many families of bilinear and bilateral
generating functions for the family of analytic functions given by (\ref{aa4})
by means of the similar method presented in \cite{AD,AA2,AAC,AE,AEO,ASA,EA,
ES,GAA, C,KO,SOK}.

For this purpose, let's begin the following theorem.

\begin{theorem}
\label{theorem3.1}Corresponding to an identically non-vanishing function
$\Omega_{\mu}(y_{1},...,y_{s}\,)$ of $s$ complex variables $y_{1},...,y_{s}$
$(s\in \mathbb{N})$ and of complex order $\mu$, if
\begin{equation}
\Lambda_{\mu,\nu}(y_{1},...,y_{s};z):=\sum \limits_{k=0}^{\infty}a_{k}%
\Omega_{\mu+\nu k}(y_{1},...,y_{s}\,)z^{k} \label{g1}%
\end{equation}%
\[
(a_{k}\neq0\,,\, \, \, \mu,\nu \in \mathbb{C})
\]
and
\begin{equation}
\Phi_{n,p,\mu,\nu}(x;y_{1},...,y_{s};\zeta):=\sum \limits_{k=0}^{[n/p]}%
\frac{a_{k}}{\left(  n-pk\right)  !}\Theta_{n-pk}(x)\Omega_{\mu+\nu k}%
(y_{1},...,y_{s}\,)\zeta^{k} \label{g2}%
\end{equation}%
\[
\left(  n,p\in \mathbb{N}\right)
\]
then it follows
\begin{equation}
\sum \limits_{n=0}^{\infty}\Phi_{n,p,\mu,\nu}(x;y_{1},...,y_{s};\frac{\eta
}{t^{p}})t^{n}=\alpha^{\varphi_{1}\left(  x\right)  }\psi \left(  x+t\right)
\beta^{-\varphi_{2}\left(  x+t\right)  }\Lambda_{\mu,\nu}(y_{1},...,y_{s}%
;\eta) \label{g3}%
\end{equation}
provided that each member of (\ref{g3}) exists.
\end{theorem}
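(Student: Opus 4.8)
The plan is to substitute the definition (\ref{g2}) of $\Phi_{n,p,\mu,\nu}$ directly into the left-hand side of (\ref{g3}) and then reorganize the resulting double series so that the summation carrying the functions $\Theta_{m}(x)$ separates from the summation carrying the functions $\Omega_{\mu+\nu k}$. Concretely, I would first write
\[
\sum_{n=0}^{\infty}\Phi_{n,p,\mu,\nu}\!\left(x;y_{1},\dots,y_{s};\frac{\eta}{t^{p}}\right)t^{n}
=\sum_{n=0}^{\infty}\sum_{k=0}^{[n/p]}\frac{a_{k}}{(n-pk)!}\,\Theta_{n-pk}(x)\,\Omega_{\mu+\nu k}(y_{1},\dots,y_{s})\,\eta^{k}\,t^{\,n-pk}.
\]
The decisive step is the change of summation index: replacing $n$ by $n+pk$ (equivalently, taking $m=n-pk\ge 0$ as the new outer-sum variable alongside the free index $k\ge 0$, which is legitimate because the constraint $0\le k\le n/p$ is exactly $n-pk\ge 0$). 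This decouples the two indices, since the factor $t^{\,n-pk}$ becomes simply $t^{m}$ and no longer involves $k$.

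After this re-indexing the double sum factors as a product of two independent series,
\[
\left(\sum_{m=0}^{\infty}\frac{\Theta_{m}(x)}{m!}\,t^{m}\right)\left(\sum_{k=0}^{\infty}a_{k}\,\Omega_{\mu+\nu k}(y_{1},\dots,y_{s})\,\eta^{k}\right).
\]
I would then invoke Theorem \ref{theorem2.1} to identify the first factor with $\alpha^{\varphi_{1}(x)}\psi(x+t)\beta^{-\varphi_{2}(x+t)}$, and recall the defining relation (\ref{g1}) to identify the second factor with $\Lambda_{\mu,\nu}(y_{1},\dots,y_{s};\eta)$. Multiplying the two identifications together produces precisely the right-hand side of (\ref{g3}), which finishes the argument.

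The only genuine subtlety is the rearrangement of the double series: interchanging the order of summation and re-indexing is valid only where the series involved converge absolutely, and this is exactly what the hypothesis ``provided that each member of (\ref{g3}) exists'' is intended to secure. Accordingly, in writing up the proof I would carry out the manipulation formally within the common region of convergence and note that no convergence difficulty arises beyond what that proviso already grants. Apart from this, the proof is pure index bookkeeping, so I do not expect any real obstacle.
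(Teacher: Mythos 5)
Your proposal is correct and follows essentially the same route as the paper's own proof: substituting (\ref{g2}) into the left side of (\ref{g3}), re-indexing $n\mapsto n+pk$ to decouple the indices, and then factoring the double series into the generating function of Theorem \ref{theorem2.1} times the series (\ref{g1}). Your explicit remark about absolute convergence being covered by the proviso ``provided that each member of (\ref{g3}) exists'' is a reasonable addition but not a departure from the paper's argument.
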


\begin{proof}
For convenience, let $S$ denote the left side of (\ref{g3}) in Theorem
\ref{theorem3.1}. If we substitute the explicit form of polynomials
\[
\Phi_{n,p,\mu,\nu}(x;y_{1},...,y_{s};\frac{\eta}{t^{p}})
\]
from the definition (\ref{g2}) in (\ref{g3}), we can write
\begin{equation}
S=\sum \limits_{n=0}^{\infty}\sum \limits_{k=0}^{[n/p]}\frac{a_{k}}{\left(
n-pk\right)  !}\Theta_{n-pk}(x)\Omega_{\mu+\nu k}(y_{1},...,y_{s}\,)\eta
^{k}t^{n-pk}\,. \label{g4}%
\end{equation}
Upon inverting the order of summation in (\ref{g4}), if we replace $n$ by
$n+pk,$ it follows that
\begin{align*}
S  &  =\sum \limits_{n=0}^{\infty}\sum \limits_{k=0}^{\infty}\frac{a_{k}}%
{n!}\Theta_{n}(x)\Omega_{\mu+\nu k}(y_{1},...,y_{s}\,)\eta^{k}t^{n}\\
&  =\sum_{n=0}^{\infty}\Theta_{n}(x)\frac{t^{n}}{n!}\sum \limits_{k=0}^{\infty
}a_{k}\Omega_{\mu+\nu k}(y_{1},...,y_{s}\,)\eta^{k}\\
&  =\alpha^{\varphi_{1}\left(  x\right)  }\psi \left(  x+t\right)
\beta^{-\varphi_{2}\left(  x+t\right)  }\Lambda_{\mu,\nu}(y_{1},...,y_{s}%
;\eta),
\end{align*}
which completes the proof.
\end{proof}

The above theorem can be used to construct various families of bilateral and
bilateral generating functions by expressing the multivariable function
\[
\Omega_{\mu+\nu k}(y_{1},...,y_{s}\,)\, \,(k\in \mathbb{N}_{0}\,,\,
\,s\in \mathbb{N)}%
\]
in terms of simpler function of one and more variables.

For example, taking%
\[
s=1\  \text{\thinspace and \ }\Omega_{\mu+\nu k}(y)=\mathcal{B}_{\mu+\nu
k}^{(\alpha)}(y;\lambda),
\]
where $\mathcal{B}_{n}^{(\alpha)}(x;\lambda)$ ($\lambda \in \mathbb{C)~}$denotes
Apostol-Bernoulli polynomials of order $\alpha \in \mathbb{N}_{0}$ which are
defined by the generating function \cite{L-S}%
\begin{equation}%
\begin{array}
[c]{c}%
\left(  \dfrac{t}{\lambda e^{t}-1}\right)  ^{\alpha}e^{xt}=\sum \limits_{n=0}%
^{\infty}\mathcal{B}_{n}^{(\alpha)}(x;\lambda)\dfrac{t^{n}}{n!}\\
\\
\left(  \left \vert t\right \vert <2\pi,\text{ when }\lambda=1~~;~~\left \vert
t\right \vert <\left \vert \log \lambda \right \vert ,\text{ when }\lambda
\neq1\right)  ,
\end{array}
\label{c1}%
\end{equation}
leads to a class of bilateral generating functions for Apostol-Bernoulli
polynomials and the family of analytic functions defined by (\ref{aa4}).

\begin{corollary}
\label{corollary3.1} If $\Lambda_{\mu,\nu}(y;z):=\sum \limits_{k=0}^{\infty
}a_{k}\mathcal{B}_{\mu+\nu k}^{(\alpha)}(y;\lambda)~z^{k}$ where $a_{k}%
\neq0\,,\, \, \, \nu,\mu \in \mathbb{C};$ and
\begin{align*}
&  \Phi_{n,p,\mu,\nu}(x;y;\zeta)\\
&  :=\sum \limits_{k=0}^{[n/p]}\frac{a_{k}}{\left(  n-pk\right)  !}%
\Theta_{n-pk}(x)\mathcal{B}_{\mu+\nu k}^{(\alpha)}(y;\lambda)\zeta^{k}%
\end{align*}
where $n,p\in \mathbb{N}$, then we have%
\begin{equation}
\sum \limits_{n=0}^{\infty}\Theta_{n,p,\mu,\nu}\left(  x;y;\dfrac{\eta}{t^{p}%
}\right)  t^{n}=\alpha^{\varphi_{1}\left(  x\right)  }\psi \left(  x+t\right)
\beta^{-\varphi_{2}\left(  x+t\right)  }\Lambda_{\mu,\nu}(y;\eta) \label{h1}%
\end{equation}
provided that each member of (\ref{h1}) exists.
\end{corollary}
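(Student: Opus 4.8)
The plan is to obtain Corollary \ref{corollary3.1} as an immediate specialization of Theorem \ref{theorem3.1}. Since Theorem \ref{theorem3.1} is stated for an arbitrary identically non-vanishing multivariable function $\Omega_{\mu}(y_1,\dots,y_s)$, the strategy is simply to choose the data appearing in that theorem so that the resulting identity becomes the claimed one.

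First I would set $s=1$ and take $\Omega_{\mu+\nu k}(y) = \mathcal{B}_{\mu+\nu k}^{(\alpha)}(y;\lambda)$, the Apostol--Bernoulli polynomials of order $\alpha$ defined by \eqref{c1}. With this choice, the series $\Lambda_{\mu,\nu}$ of \eqref{g1} becomes exactly $\Lambda_{\mu,\nu}(y;z)=\sum_{k=0}^{\infty}a_k\mathcal{B}_{\mu+\nu k}^{(\alpha)}(y;\lambda)\,z^{k}$, and the polynomials $\Phi_{n,p,\mu,\nu}$ of \eqref{g2} become precisely the $\Phi_{n,p,\mu,\nu}(x;y;\zeta)=\sum_{k=0}^{[n/p]}\tfrac{a_k}{(n-pk)!}\Theta_{n-pk}(x)\mathcal{B}_{\mu+\nu k}^{(\alpha)}(y;\lambda)\zeta^{k}$ written in the statement of the corollary. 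One only needs to note that $\mathcal{B}_n^{(\alpha)}(\cdot;\lambda)$ is not identically zero, so the hypothesis on $\Omega_\mu$ is satisfied.

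Then I would invoke \eqref{g3} of Theorem \ref{theorem3.1} directly: substituting $\zeta=\eta/t^p$ and summing over $n$ yields
\[
\sum_{n=0}^{\infty}\Phi_{n,p,\mu,\nu}\!\left(x;y;\frac{\eta}{t^p}\right)t^{n}
=\alpha^{\varphi_1(x)}\psi(x+t)\beta^{-\varphi_2(x+t)}\Lambda_{\mu,\nu}(y;\eta),
\]
which is exactly \eqref{h1} (the symbol $\Theta_{n,p,\mu,\nu}$ there being a typographical variant of $\Phi_{n,p,\mu,\nu}$). The convergence proviso ``provided that each member exists'' is inherited verbatim from Theorem \ref{theorem3.1}, with the additional built-in restriction on $t$ coming from the region of validity in \eqref{c1}.

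There is essentially no obstacle here: the corollary is a pure instantiation, and the only thing to be careful about is matching notation (confirming $\Theta_{n,p,\mu,\nu}=\Phi_{n,p,\mu,\nu}$) and recording the domain constraint $|t|<2\pi$ when $\lambda=1$, respectively $|t|<|\log\lambda|$ when $\lambda\neq1$, under which the generating relation \eqref{c1} for $\mathcal{B}_{n}^{(\alpha)}$ is legitimate. Thus the proof amounts to the single sentence: apply Theorem \ref{theorem3.1} with $s=1$ and $\Omega_{\mu+\nu k}(y)=\mathcal{B}_{\mu+\nu k}^{(\alpha)}(y;\lambda)$.
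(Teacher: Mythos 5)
Your proposal is correct and matches the paper exactly: Corollary \ref{corollary3.1} is obtained there precisely by specializing Theorem \ref{theorem3.1} with $s=1$ and $\Omega_{\mu+\nu k}(y)=\mathcal{B}_{\mu+\nu k}^{(\alpha)}(y;\lambda)$, with $\Theta_{n,p,\mu,\nu}$ being a typo for $\Phi_{n,p,\mu,\nu}$. The only minor quibble is that the convergence restriction from \eqref{c1} concerns $\eta$ rather than $t$ and really only enters at the stage of Remark \ref{remark3.1} (where $a_k=1/k!$, $\mu=0$, $\nu=1$); for the corollary itself the blanket proviso that each member of \eqref{h1} exists suffices.
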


\begin{remark}
\label{remark3.1} If we use the generating relation (\ref{c1}) for
Apostol-Bernoulli polynomials by taking $a_{k}=\dfrac{1}{k!},$ $\mu=0,$
$\nu=1$, we get
\begin{align*}
&  \sum \limits_{n=0}^{\infty}\sum \limits_{k=0}^{[n/p]}\Theta_{n-pk}%
(x)\mathcal{B}_{k}^{(\alpha)}(y;\lambda)\frac{\eta^{k}}{k!}\frac{t^{n-pk}%
}{\left(  n-pk\right)  !}\\
&  =\alpha^{\varphi_{1}\left(  x\right)  }\psi \left(  x+t\right)
\beta^{-\varphi_{2}\left(  x+t\right)  }\left(  \dfrac{\eta}{\lambda e^{\eta
}-1}\right)  ^{\alpha}e^{y\eta},
\end{align*}
where
\[
\left(  \left \vert \eta \right \vert <2\pi,\text{ when }\lambda=1~~;~~\left \vert
\eta \right \vert <\left \vert \log \lambda \right \vert ,\text{ when }\lambda
\neq1\right)  .
\]

\end{remark}

In a similar manner, choosing $s=1\, \, \,$and $\Omega_{\mu+\nu k}%
(y\,)=\Theta_{\mu+\nu k}(y)$ in Theorem \ref{theorem3.1}, we obtain the
following class of bilinear generating functions for the functions generated
by (\ref{aa*}).

\begin{corollary}
\label{corollary3.2}If
\[
\Lambda_{\mu,\nu}(y;z):=%
%TCIMACRO{\dsum \limits_{k=0}^{\infty}}%
%BeginExpansion
{\displaystyle \sum \limits_{k=0}^{\infty}}
%EndExpansion
a_{k}\Theta_{\mu+\nu k}(y)z^{k},
\]
where $a_{k}\neq0\,,\, \, \mu,\nu \in \mathbb{C}$ and
\begin{align*}
&  \Phi_{n,p,\mu,\nu}(x;y;\zeta)\\
&  :=\sum \limits_{k=0}^{[n/p]}a_{k}\Theta_{\mu+\nu k}(y)\frac{\Theta
_{n-pk}(x)}{\left(  n-pk\right)  !}\zeta^{k}%
\end{align*}
where $n,p\in \mathbb{N}.$ Then we have
\begin{equation}%
\begin{array}
[c]{ll}%
\sum \limits_{n=0}^{\infty}\Phi_{n,p,\mu,\nu}\left(  x;y;\dfrac{\eta}{t^{p}%
}\right)  t^{n} & =\alpha^{\varphi_{1}\left(  x\right)  }\psi \left(
x+t\right)  \beta^{-\varphi_{2}\left(  x+t\right)  }\\
& \times \Lambda_{\mu,\nu}(y;\eta)
\end{array}
\label{h2}%
\end{equation}
provided that each member of (\ref{h2}) exists.
\end{corollary}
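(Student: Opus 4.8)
\textbf{Proof proposal for Corollary \ref{corollary3.2}.} The plan is to obtain this statement as a direct specialization of Theorem \ref{theorem3.1}, so the work is mostly a matter of matching notation. First I would set $s=1$ in Theorem \ref{theorem3.1} and choose the multivariable function to be $\Omega_{\mu+\nu k}(y_1)=\Theta_{\mu+\nu k}(y)$, where on the right-hand side $\Theta_{\mu+\nu k}$ denotes the very family of analytic functions defined by $(\ref{aa4})$ (equivalently, generated by $(\ref{aa*})$), now evaluated at the auxiliary variable $y$. With this identification, the series $\Lambda_{\mu,\nu}(y_1,\dots,y_s;z)$ of $(\ref{g1})$ becomes exactly the stated $\Lambda_{\mu,\nu}(y;z)=\sum_{k=0}^{\infty}a_k\,\Theta_{\mu+\nu k}(y)\,z^{k}$, and the finite sum $\Phi_{n,p,\mu,\nu}$ of $(\ref{g2})$ becomes precisely the $\Phi_{n,p,\mu,\nu}(x;y;\zeta)$ displayed in the statement.

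Next I would simply invoke the conclusion $(\ref{g3})$ of Theorem \ref{theorem3.1} with these choices; it reads
\[
\sum_{n=0}^{\infty}\Phi_{n,p,\mu,\nu}\!\left(x;y;\tfrac{\eta}{t^{p}}\right)t^{n}
=\alpha^{\varphi_{1}\left(x\right)}\psi\left(x+t\right)\beta^{-\varphi_{2}\left(x+t\right)}\,\Lambda_{\mu,\nu}(y;\eta),
\]
which is exactly $(\ref{h2})$. Hence no new computation is required beyond recognizing that the hypothesis ``$\Omega_{\mu}$ is an identically non-vanishing function of $s$ complex variables of complex order $\mu$'' is met here with $s=1$, since $\Theta_{\mu+\nu k}(y)$ is not identically zero as a function of $y$ (it is $\alpha^{\varphi_1(y)}$ times an $n$-th derivative of an analytic function, and the generating function $(\ref{aa*})$ shows it does not vanish identically). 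I would also note, as in Theorem \ref{theorem3.1}, that the identity is asserted only ``provided that each member of $(\ref{h2})$ exists,'' so the convergence of $\Lambda_{\mu,\nu}(y;\eta)$ and of the rearranged double series is assumed rather than proved.

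The only genuine point to check — and the step I would treat as the ``main obstacle,'' though it is light — is the interchange of the order of summation that underlies $(\ref{g3})$ and the substitution $\zeta=\eta/t^{p}$ together with the reindexing $n\mapsto n+pk$; this is carried out once and for all inside the proof of Theorem \ref{theorem3.1}, so here I would just cite it, remarking that the absolute convergence guaranteeing the rearrangement is exactly what is packaged in the clause ``provided that each member exists.'' Thus the corollary follows immediately, and I would conclude the proof with a one-line statement to that effect.
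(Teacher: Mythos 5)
Your proposal is correct and coincides with the paper's own treatment: the authors obtain Corollary \ref{corollary3.2} precisely by taking $s=1$ and $\Omega_{\mu+\nu k}(y)=\Theta_{\mu+\nu k}(y)$ in Theorem \ref{theorem3.1}, with all summation-interchange work already done in that theorem's proof and convergence covered by the ``provided each member exists'' clause. Nothing further is needed.
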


\begin{remark}
\label{remark3.2}Getting $a_{k}=\frac{1}{k!},$ $\mu=0,$ $\nu=1$ and then
taking into account the generating function (\ref{aa*}) give the bilinear
generating function for the analytic functions $\Theta_{n}(x)$
\begin{align*}
&  \sum \limits_{n=0}^{\infty}%
%TCIMACRO{\dsum \limits_{k=0}^{\left[  n/p\right]  }}%
%BeginExpansion
{\displaystyle \sum \limits_{k=0}^{\left[  n/p\right]  }}
%EndExpansion
\frac{\Theta_{k}(y)}{k!}\frac{\Theta_{n-pk}(x)}{\left(  n-pk\right)  !}%
\eta^{k}t^{n-pk}\\
&  =\alpha^{\varphi_{1}\left(  x\right)  +\varphi_{1}\left(  y\right)  }%
\psi \left(  x+t\right)  \psi \left(  y+\eta \right)  \beta^{-\varphi_{2}\left(
x+t\right)  -\varphi_{2}\left(  y+\eta \right)  }%
\end{align*}

\end{remark}

Besides, it is possible to get multivariable function $\Omega_{\mu+\psi
k}(y_{1},...,y_{s}),$ $(s\in \mathbb{N})$ as an appropriate product of several
simpler functions for every suitable choice of the coefficients $a_{k}\,
\,(k\in \mathbb{N}_{0}).$ Thus, Theorem \ref{theorem3.1} can be applied in
order to derive various families of multilinear and multilateral generating
functions for the functions generated by (\ref{aa*}).

\end{document}